\documentclass[letterpaper,11pt]{amsart}
\usepackage[margin=1.2in]{geometry}
\usepackage{amsmath,amsthm,amssymb}
\usepackage{xspace,xcolor}
\usepackage[breaklinks,colorlinks,citecolor=teal,linkcolor=teal,urlcolor=teal,pagebackref,hyperindex]{hyperref}
\usepackage[alphabetic]{amsrefs}
\usepackage[all]{xy}
\usepackage[english]{babel}
\usepackage{enumitem}
\usepackage{tikz}
\usepackage{tikz-cd}
\usepackage{mathrsfs}

\usepackage{mathtools}

\newcounter{intro}

\newtheorem{intro-conjecture}[intro]{Conjecture}
\newtheorem{intro-corollary}[intro]{Corollary}
\newtheorem{intro-theorem}[intro]{Theorem}

\newcommand{\theoremref}[1]{\hyperref[#1]{Theorem~\ref*{#1}}}
\newcommand{\lemmaref}[1]{\hyperref[#1]{Lemma~\ref*{#1}}}
\newcommand{\definitionref}[1]{\hyperref[#1]{Definition~\ref*{#1}}}
\newcommand{\propositionref}[1]{\hyperref[#1]{Proposition~\ref*{#1}}}
\newcommand{\conjectureref}[1]{\hyperref[#1]{Conjecture~\ref*{#1}}}
\newcommand{\corollaryref}[1]{\hyperref[#1]{Corollary~\ref*{#1}}}
\newcommand{\exampleref}[1]{\hyperref[#1]{Example~\ref*{#1}}}

\theoremstyle{plain}
\newtheorem{thm}{Theorem}[section]

\newtheorem{lem}[thm]{Lemma}
\newtheorem{prop}[thm]{Proposition}
\newtheorem{cor}[thm]{Corollary}

\theoremstyle{definition}
\newtheorem{defi}[thm]{Definition}

\newtheorem{eg}[thm]{Example}

\theoremstyle{remark}
\newtheorem{rmk}[thm]{Remark}

\def\N{{\mathbf N}}
\def\Z{{\mathbf Z}}
\def\Q{{\mathbf Q}}

\def\C{{\mathbf C}}

\def\P{{\mathbf P}}

\def\cA{\mathcal{A}}

\def\cC{\mathcal{C}}
\def\cD{\mathcal{D}}
\def\cE{\mathcal{E}}

\def\cH{\mathcal{H}}

\def\cL{\mathcal{L}}
\def\cM{\mathcal{M}}
\def\cN{\mathcal{N}}
\def\cO{\mathcal{O}}

\def\cT{\mathcal{T}}

\def\.{\cdot}
\def\^{\widehat}

\def\de{\partial}

\def\({\left(}
\def\){\right)}

\renewcommand{\and}{ \ \ \text{ and } \ \ }

\begin{document}

\author[B.~Dirks]{Bradley Dirks}

\address{Department of Mathematics, Stony Brook University, Stony Brook, NY 11794-3651, USA}

\email{bradley.dirks@stonybrook.edu}

\thanks{The author was partially supported by the National Science Foundation under Grant MSPRF DMS-2303070.}

\title[Relative Du Bois complexes via MHMs]{A Hodge module construction of relative Du Bois complexes}

\begin{abstract} We construct relative Du Bois and relative intersection Du Bois
complexes for smooth-factorizable morphisms to a smooth complex base using relative de
Rham complexes of mixed Hodge modules and Grothendieck duality. For a
smooth morphism, these complexes recover the relative K\"ahler
differentials. Comparisons with the relative Du Bois complexes of Kov\'{a}cs and Ning remain open.

We construct finite
filtrations on the absolute Du Bois and intersection Du Bois complexes
whose graded pieces are the corresponding relative complexes tensored
with differential forms from the base. We also study restriction to
fibers and its relation to mild Hodge-theoretic singularity classes.
\end{abstract}

\maketitle

\section{Introduction}

Let $f\colon X\to B$ be a smooth morphism of smooth complex varieties, with $B$ connected.
The exact sequence
\[
0\longrightarrow f^*\Omega_B^1\longrightarrow \Omega_X^1
\longrightarrow \Omega_{X/B}^1\longrightarrow 0
\]
induces, for every $p$, a finite filtration on $\Omega_X^p$ with
graded pieces
\[
\operatorname{Gr}_{-j}^G\Omega_X^p
\simeq
\Omega_{X/B}^{p-j}\otimes_{\mathcal O_X}f^*\Omega_B^j.
\]

Moreover, in this smooth setting the objects $\Omega_{X/B}^j$ satisfy the base change property: for any $b\in B$, let $i_b \colon X_b \to X$ be the inclusion of the fiber over $b$. Then for all $j\in \Z$, by \cite{Hartshorne}*{Prop. 8.10} we have isomorphisms
\[ i_b^*(\Omega_{X/B}^j) \cong \Omega_{X_b/\C}^j.\]

The purpose of this paper is to construct objects with similar properties for smooth-factorizable morphisms $f\colon X \to B$ when $X$ is singular or $f$ is not smooth. This problem was originally studied by Kov\'{a}cs \cite{KovacsFirstRelDB}, who constructed the individual graded pieces $\underline{\Omega}_{X/B}^p$. The construction was recently refined by Kov\'{a}cs and Taji \cite{KovacsTaji}, when $\dim B =1$, to a filtered complex whose associated graded pieces agree with Kov\'{a}cs' complexes. 

In this work, we provide alternative constructions of the individual graded piece objects $\underline{\Omega}_{X/B}^p$ which are closely related to the author's forthcoming joint work with Chen and Olano \cite{CDOHigher}. We pose the question of comparing our objects with those of Kov\'{a}cs (and other, related constructions due to Ning \cite{NingDeformations}). See Section \ref{sec-OpenProblems}.

A \emph{smooth-factorizable} morphism is $f\colon X\to B$, with $B$ smooth and connected, admitting a factorization
\[
X\xrightarrow[]{\iota} B\times Y\longrightarrow B
\]
with $Y$ smooth and $\iota$ a closed embedding. We call $f$ locally smooth-factorizable if $B$ admits an open cover $\{U_\alpha\}_{\alpha \in \Lambda}$ such that $f\vert_{f^{-1}(U_\alpha)} \colon f^{-1}(U_\alpha) \to U_\alpha$ is smooth-factorizable for all $\alpha \in \Lambda$. For example, any projective morphism is locally smooth-factorizable.

\begin{defi} Let $X \xrightarrow[]{\iota} B \times Y \to B$ be a smooth factorization of $f \colon X \to B$. For any $p\in \Z$, define
\[ \underline{\Omega}_{X/B}^p \coloneqq \mathbb D_X({\rm Gr}^F_{p} {\rm DR}_{B \times Y/B}(\iota_* \mathbf D_X(\Q_X^H)))[p] \]
which, by construction, lies in $D^b_{\rm coh}(\cO_X)$.

We also define the relative intersection Du Bois complex
\[ {\rm I}\underline{\Omega}_{X/B}^p \coloneqq \mathbb D_X({\rm Gr}^F_{p}{\rm DR}_{B \times Y/B}(\iota_* \mathbf D({\rm IC}_X^H)))[p-\dim X]\]
so that there is a morphism for all $p \in \Z$:
\[ \underline{\Omega}_{X/B}^p \to {\rm I}\underline{\Omega}_{X/B}^p.\]
\end{defi}

The duality is essential. We want to define an object which, for smooth morphisms, agrees with the relative K\"{a}hler differentials as at the start of the introduction. The naive relative de Rham complex construction does not satisfy this property.

\begin{rmk} If $f \colon X \to B$ is smooth with $\dim B > 0$, then in particular, it is smooth-factorizable via the graph construction. A computation shows that, in this case,
\[ {\rm I}\underline{\Omega}_{X/B}^p = \underline{\Omega}_{X/B}^p = \Omega^p_{X/B}.\] 
See Example \ref{eg-SmoothMaps}.

In fact, this relation \emph{fails} if we use the naive definition (without the ``double dual'') of relative Du Bois complex via mixed Hodge modules. Indeed, if we consider
\[ {\rm Gr}^F_{-p} {\rm DR}_{B\times Y/B}(\iota_* \Q_X^H)[p],\]
then in this example, we have
\[ {\rm Gr}^F_{-\dim X} {\rm DR}_{B\times X/B}(\iota_* \Q_X^H[\dim X]) \cong \omega_X \neq \Omega_{X/B}^{\dim X} = 0,\]
because ${\rm Gr}^F_{-\dim X} \iota_* \Q_X^H$ is the first non-zero piece of the Hodge filtration.

This explains the duality in the defining formula for the relative Du Bois complexes, and shows that, in general, the relative de Rham functor does not satisfy a duality compatibility relation, unlike the usual de Rham functor.
\end{rmk}

Our first main result is the existence of a natural filtration (in the sense of derived categories) of the absolute Du Bois complex of $X$ by the relative Du Bois complexes. The proof we give uses smooth-factorizability in an essential way.

\begin{intro-theorem} \label{thm-filtration} Let $f\colon X \to B$ be a smooth-factorizable morphism with $B$ smooth and connected. For any $p$, there is a commutative diagram
\[ \begin{tikzcd}  0 = G_{-\dim B-1} \underline{\Omega}_X^p \ar[r] \ar[d] & G_{-\dim B} \underline{\Omega}_X^p \ar[r] \ar[d] & G_{-(\dim B -1)} \underline{\Omega}_X^p \ar[r] \ar[d] & \ldots \ar[r] & G_{0} \underline{\Omega}_X^p = \underline{\Omega}_X^p \ar[d]  \\  0 = G_{-\dim B-1} {\rm I} \underline{\Omega}_X^p \ar[r] & G_{-\dim B} {\rm I}\underline{\Omega}_X^p \ar[r]& G_{-(\dim B -1)} {\rm I}\underline{\Omega}_X^p \ar[r]  & \ldots \ar[r] & G_{0} {\rm I}\underline{\Omega}_X^p = {\rm I}\underline{\Omega}_X^p ,\end{tikzcd}\]
in $D^b_{\rm coh}(\cO_X)$ such that
\[ {\rm Gr}^G_{-j} \underline{\Omega}_X^p \coloneqq {\rm cone}(G_{-j-1}\underline{\Omega}_X^p \to G_{-j}\underline{\Omega}_X^p)  \simeq \underline{\Omega}_{X/B}^{p-j} \otimes_{\cO_X} f^*(\Omega_B^j),\]
\[ {\rm Gr}^G_{-j} {\rm I}\underline{\Omega}_X^p \coloneqq {\rm cone}(G_{-j-1}{\rm I}\underline{\Omega}_X^p \to G_{-j}{\rm I}\underline{\Omega}_X^p)\simeq {\rm I}\underline{\Omega}_{X/B}^{p-j} \otimes_{\cO_X} f^*(\Omega_B^j).\]
\end{intro-theorem}

Thus, our relative Du Bois complexes filter the absolute Du Bois complexes in an analogous way to the case of a smooth morphism. If $f\colon X \to B$ is smooth, this is the filtration from the start of the paper.

\begin{rmk} For $\dim B = 1$, the two-step filtration and identification of the associated graded pieces are equivalent to the exact triangle for every $p\in \Z$:
\[ \underline{\Omega}_{X/B}^{p-1} \otimes f^*(\Omega_B^1)  \to \underline{\Omega}_X^p \to\underline{\Omega}_{X/B}^p \xrightarrow[]{+1}.\]
\end{rmk}

Our remaining results concern base-change. Let $f\colon X \to B$ be a flat morphism which is locally smooth-factorizable, in the sense that, for all $b\in B$, there is a neighborhood $b\in U \subseteq B$ such that $f^{-1}(U) \to U$ is smooth-factorizable. Then for all $b\in B$ and $p\in \Z$, we construct base-change morphisms
\[L i_b^* \underline{\Omega}_{f^{-1}(U)/U}^p \to \underline{\Omega}_{X_b}^p\]
where $X_b = f^{-1}(b)$ is the fiber over $b$. These base-change morphisms are compatible with restricting the open neighborhood $U$. For the relative intersection Du Bois complexes, there is no canonical comparison morphism, but in some favorable situations, we can still compare the objects.

Given $f\colon X \to B$, we let $i_b \colon X_b = f^{-1}(X) \hookrightarrow X$ denote the inclusion of the fiber over $b\in B$. If $b\in U$ is an open neighborhood, we also write $i_b \colon X_b \hookrightarrow f^{-1}(U)$.

Our first base-change result is ``generic on the base'':
\begin{intro-theorem} \label{thm-generalBC} Let $f\colon X \to B$ be a locally smooth-factorizable morphism with $B$ a smooth, connected variety. Then there exists a dense open subset $U \subseteq B$ so that $f^{-1}(U) \to U$ is flat and smooth-factorizable and such that, for all $b\in U$ and $p\in \Z$, the base-change morphism
\[ L i_b^* \underline{\Omega}_{f^{-1}(U)/U}^p \to \underline{\Omega}_{X_b}^p\]
is a quasi-isomorphism.

Moreover, for all $p\in \Z$ and all $b\in U$, there exists a quasi-isomorphism
\[  L i_b^* {\rm I}\underline{\Omega}_{f^{-1}(U)/U}^p \simeq {\rm I}\underline{\Omega}_{X_b}^p.\]
\end{intro-theorem}

Thus, our object has the correct value on fibers over general points of $B$. This should be compared with the analogous result \cite{JiKovacs}. 

The generic base change does not control base change to a special fiber. Under the mild singularity notions of \cite{CDOHigher}, however, the same compatibility persists in a neighborhood of a mildly singular fiber. This result was our original motivation for this construction.

\begin{intro-theorem} \label{thm-higherBC} Let $f\colon X \to B$ be flat and projective (in particular, locally smooth-factorizable) with $B$ smooth and connected.

If a fiber $X_{b_0}$ is symbolically $m$-Du Bois, then there exists a neighborhood $b_0 \in U$ over which $f$ is smooth-factorizable and such that the natural base-change morphism
\[ L i_b^* \underline{\Omega}_{f^{-1}(U)/U}^p \to \underline{\Omega}_{X_b}^p\]
is a quasi-isomorphism for all $p\leq m$ and all $b\in U$.

If $X_{b_0}$ is instead weakly $m$-rational, then there exists a neighborhood $b_0 \in U$ over which $f$ is smooth-factorizable and a quasi-isomorphism
\[ L i_b^* {\rm I}\underline{\Omega}_{f^{-1}(U)/U}^p \simeq {\rm I}\underline{\Omega}_{X_b}^p\]
for all $p\leq m$ and all $b\in U$.
\end{intro-theorem}

Related base-change results were obtained in the recent preprint \cite{NingDeformations}, though the mild class of singularities considered there is different from ours. Moreover, Ning considers a notion of relative Du Bois complex which differs from Kov\'{a}cs' original construction, and which seems better suited to base change results. We ask, in Section \ref{sec-OpenProblems}, whether our construction is comparable to Ning's.

\noindent {\bf Conventions.} Throughout, a variety is a reduced separated scheme of finite type over $\C$. We assume that the fibers of any morphism are reduced. Whenever the intersection complex or weakly $m$-rational singularities are considered, the variety in question is assumed equidimensional.

For a variety $X$, we write $d_X\coloneqq \dim X$ for convenience of notation.

We let $\mathbb D_X(-) \coloneqq R\cH om_{\cO_X}(-,\omega_X^\bullet)$ denote the Grothendieck duality functor on a (possibly singular) variety $X$. We will often make use of the natural isomorphism
\[ \mathbb D_Z \circ \iota_*(-) \simeq \iota_* \circ \mathbb D_X(-)\]
below.

Grothendieck duality relates the standard and shriek restriction functors: for a closed embedding of smooth varieties $i \colon Y \to Z$,
\[ \mathbb D_Y \circ Li^!(-) \simeq Li^*(-) \circ \mathbb D_Z\]
where $Li^!(-) \coloneqq R \cH om_{\cO_Z}(\cO_Y,-) \simeq Li^*(-) \otimes \det \cN_{Y/Z} [\dim Y - \dim Z]$, where $\cN_{Y/Z}$ is the normal bundle of the closed embedding.

We let $\mathbf D(-)$ denote the duality for filtered $\cD_Y$-modules on a smooth variety $Y$, as defined in \cite{SaitoMHP}*{Sec. 2.4}.

For a filtered object $(\cM,F)$, the notation $F[j]$ denotes the shifted filtration:
\[ (F[j])_p \cM \coloneqq F_{p-j} \cM.\]

\noindent {\bf Acknowledgments.} The author thanks Qianyu Chen, S\'{a}ndor Kov\'{a}cs, Haoming Ning, Sebasti\'{a}n Olano and Sung Gi Park for conversations related to this work.

\section{Relative de Rham complexes}\label{sec-prelim}
We review some basic constructions for relative de Rham complexes of filtered $\cD$-modules in this section.

\subsection{Relative Spencer Complex}
Let $\pi \colon Z \to B$ be a smooth morphism of smooth varieties of relative dimension $r$, and let $(\cM,F)$ be a filtered right $\cD_Z$-module. We have the relative de Rham complex ${\rm DR}_{Z/B}(\cM,F)$ written out in cohomological degrees $-r,\ldots, 0$:
\begin{equation} \label{eq-relDR}  [(\cM,F[r]) \otimes_{\cO} \bigwedge^{r} \cT_{Z/B} \xrightarrow[]{\nabla} (\cM,F[r-1]) \otimes_{\cO} \bigwedge^{r-1} \cT_{Z/B} \xrightarrow[]{\nabla} \dots \xrightarrow[]{\nabla} (\cM,F)].\end{equation}

Here $\nabla$ is the usual right relative Spencer differential. The $p$th Hodge piece (meaning $F_p$) of the $-k$th term is
\[
F_{p-k}\mathcal M\otimes\bigwedge^kT_{Z/B}.
\]

Thus the differential preserves $F_\bullet$, and the associated graded is a complex of $\mathcal O_Z$-modules. For bounded complexes we use the totalization of the obvious double complex. This construction is functorial and commutes with restriction along open immersions over $B$.

We now specialize to $Z=B\times Y$ for $Y$ a smooth $r$-dimensional variety with $q\colon Z \to Y$ the second projection. In this case
$\cT_{Z/B}=q^*\cT_Y$, and the product decomposition
\[
\cT_Z \simeq\pi^*\cT_B\oplus\cT_{Z/B}
\]
allows us to factor the absolute Spencer complex into the relative
and base directions. Indeed, each term of the complex ${\rm DR}_{Z/B}(\cM,F)$ has the structure of a right $\pi^{-1}(\cD_B)$-module, and the morphisms are right $\pi^{-1}(\cD_B)$-linear. Thus, using the construction of the usual filtered de Rham complex for $\cD_B$-modules, we have a natural isomorphism
\begin{equation} \label{eq-iterateDR} {\rm DR}_Z(\cM,F) \simeq {\rm DR}_{\pi^{-1}(\cD_B)}{\rm DR}_{Z/B}(\cM,F).\end{equation}

This extends immediately to bounded complexes of filtered $\cD_Z$-modules $(\cM^\bullet,F)$. 

We have an analogous factorization for the associated graded. Set
\[\cA_B \coloneqq {\rm Sym}_{\cO_B}\cT_B, \qquad \cA_Z \coloneqq {\rm Sym}_{\cO_Z}\cT_Z,\qquad\cA_{Z/B} \coloneqq {\rm Sym}_{\cO_Z}\cT_{Z/B}.\]

The product decomposition
\[\cT_Z\simeq\pi^*\cT_B\oplus\cT_{Z/B}\]
induces an isomorphism of graded $\cO_Z$-algebras
\[\cA_Z\simeq\pi^*(\cA_B) \otimes_{\cO_Z}\cA_{Z/B}.\]
In particular, there are natural quotient maps
\[\cA_Z\to\pi^*(\cA_B) \to\cO_Z,\]
where the first kills the symbols of the fiber-direction tangent vectors and the second
kills the symbols of tangent vectors coming from the base $B$.

For a bounded complex $(\cM^\bullet,F)$, put
\[
\cN^\bullet\coloneqq{\rm Gr}^F\cM^\bullet
=\bigoplus_a{\rm Gr}_a^F\cM^\bullet.
\]
The graded relative Spencer description (see \cite{SaitoMHP}*{Lem. 2.1.6}) gives natural
quasi-isomorphisms
\begin{equation}\label{eq-relative-symbol-tensor}
\bigoplus_p{\rm Gr}_p^F{\rm DR}_{Z/B}(\cM^\bullet)
\simeq
\cN^\bullet\otimes_{\cA_Z}^L\pi^*(\cA_B)
\simeq
\cN^\bullet\otimes_{\cA_{Z/B}}^L\cO_Z.
\end{equation}
These are quasi-isomorphisms of graded $\pi^*(\cA_B)$-complexes.

Similarly, as is well-known,
\[
\bigoplus_p{\rm Gr}_p^F{\rm DR}_{Z}(\cM^\bullet)\simeq\cN^\bullet\otimes_{\cA_Z}^L\cO_Z.\]
Consequently, transitivity of derived tensor products gives graded quasi-isomorphisms
\begin{align}
\label{eq-graded-Spencer-transitivity}
\bigoplus_p{\rm Gr}_p^F{\rm DR}_{Z}(\cM^\bullet)&\simeq\left(\cN^\bullet\otimes_{\cA_Z}^L\pi^*(\cA_B)
\right)\otimes_{\pi^*(\cA_B)}^L\cO_Z\\
&\simeq
\left(\bigoplus_p{\rm Gr}_p^F{\rm DR}_{Z/B}(\cM^\bullet)\right)\otimes_{\pi^*(\cA_B)}^L\cO_Z.
\end{align}

Indeed, the first derived tensor product is computed by the Koszul
resolution of $\pi^*(\cA_B)$ over $\cA_Z$, whose term in degree
$k$ is
\[\cA_Z(-k)\otimes_{\cO_Z}\bigwedge^k\cT_{Z/B}.\]
The second is computed by the pulled-back Koszul resolution of
$\cO_B$ over $\cA_B$, whose term in degree $j$ is
\[\pi^*(\cA_B)(-j)\otimes_{\cO_Z} \pi^*\left(\bigwedge^j\cT_B\right).\]

\subsection{Smooth morphisms}
We can now verify the formula from the introduction, showing that our notion of relative Du Bois complexes gives the right answer for a smooth morphism.

\begin{eg}[Smooth Morphisms] \label{eg-SmoothMaps} Let $f\colon X \to B$ be a smooth morphism with $X$ and $B$ both smooth. Then the short exact sequence
\[ 0 \to f^* \Omega_B^1 \to \Omega_X^1 \to \Omega_{X/B}^1 \to 0\]
gives a resolution for any $p\geq 0$:
\[ 0 \to {\rm Sym}^p(f^*(\Omega_B^1)) \to {\rm Sym}^{p-1}(f^*(\Omega_B^1)) \otimes \Omega_X^1 \to \dots \to f^* \Omega_B^1 \otimes \Omega_X^{p-1} \to \Omega_X^p\]
of $\Omega_{X/B}^p = \bigwedge^p \Omega_{X/B}^1$. We write $\cC^\bullet_p$ for the complex, which is a perfect complex that is quasi-isomorphic to $\Omega_{X/B}^p$.

Then the Grothendieck dual of $\Omega_{X/B}^p$ is given by
\[ R \cH om_{\cO_X}(\Omega_{X/B}^p,\omega_X[\dim X]) =  \cH om_{\cO_X}(\cC^\bullet_p,\omega_X[\dim X]),\]
and hence can be represented by the complex
\begin{equation} \label{eq-DualRelKahler} \omega_X \otimes_{\cO_X} \bigwedge^p \cT_X \to \omega_X \otimes f^*(\cT_B) \otimes \bigwedge^{p-1} \cT_X \to \dots \to \omega_X \otimes {\rm Sym}^p f^*(\cT_B)\end{equation}
placed in degrees $-\dim X,\dots, -\dim X + p$.

We can factor $f \colon X \to B$ through the graph embedding, yielding $X \xrightarrow[]{\iota} B\times X \xrightarrow[]{p} B$. Then, because $X$ is smooth, we have the Poincar\'{e} duality isomorphism 
\[ \mathbf D({\rm IC}_X^H[-\dim X]) \cong \mathbf D_X(\Q_X^H) \cong (\Q_X^H[\dim X](\dim X))[\dim X].\] 

The (complex of) right $\cD_{B\times X}$-module(s) underlying $\iota_* \mathbf D_X(\Q_X^H)$ is $\iota_+(\omega_X)[\dim X]$. If $b_1,\dots, b_{d_B}$ are coordinates on $B$, then we can write
\[ \iota_+(\omega_X) = \bigoplus_{\alpha \in \N^{d_B}} \omega_X \de_b^\alpha \delta_f,\]
where the action of $\theta \in \cT_X$ is given by
\[ (\omega \de_b^\alpha \delta_f)\theta = (\omega \theta) \de_b^\alpha\delta_f - \sum_{i=1}^{d_B} \theta(f^*(b_i)) \omega \de_b^{\alpha+e_i} \delta_f\]
and the Hodge filtration is (keeping in mind the Tate twist):
\[ F_{p} \iota_+(\omega_X) = \bigoplus_{|\alpha| \leq p} \omega_X \de_b^\alpha \delta_f, \quad {\rm Gr}^F_{p} \iota_+(\omega_X) = \bigoplus_{|\alpha| = p} \omega_X \de_b^\alpha \delta_f. \]

Thus, we have
\[ {\rm Gr}^F_p {\rm DR}_{B\times X/B}(\iota_* \mathbf D_X(\Q_X^H))\]
\[= [{\rm Gr}^F_{p-d_X} \iota_+ \omega_X \otimes_{\cO} \bigwedge^{d_X} \cT_X \to {\rm Gr}^F_{p+1-d_X} \iota_+ \omega_X\otimes_{\cO} \bigwedge^{d_X-1} \cT_X \to \dots \to {\rm Gr}^F_{p} \iota_+ \omega_X],\]
placed in degrees $-2d_X, \dots, -d_X$. Using the description above, this is exactly the complex \eqref{eq-DualRelKahler} shifted to the left by $p$. Putting this together, we conclude
\[ \mathbb D_X({\rm Gr}^F_p {\rm DR}_{B\times X/B}(\iota_* \mathbf D_X(\Q_X^H)))[p] \cong \Omega_{X/B}^p,\]
as claimed.
\end{eg}

\subsection{Independence of Factorization and Restriction of the Base}
In this subsection, we check that the relative filtered de Rham complex is independent of the chosen factorization $\iota \colon X \hookrightarrow B \times Y$. Note that, for $M^\bullet \in D^b({\rm MHM}(X))$ with $(\cM^\bullet,F)$ underlying $\iota_* M^\bullet \in D^b({\rm MHM}(B \times Y))$, we have
\begin{equation} \label{eq-SuppX} {\rm Gr}^F_p {\rm DR}_{B\times Y}(\cM^\bullet), {\rm Gr}^F_p {\rm DR}_{B\times Y/B}(\cM^\bullet)  \in D^b_{\rm coh}(\cO_X).\end{equation}

Indeed, the filtered $\cD$-modules underlying mixed Hodge modules with support in $X$ satisfy
\[ I \cdot {\rm Gr}^F_p  \cM^j = 0 \text{ for all } p \text{ and } j \in \Z,\]
where $I \subseteq \cO_{B\times Y}$ is the ideal sheaf defining $X$. 

Given $\iota_a \colon X \hookrightarrow B \times Y_a$ for $a\in\{1,2\}$, consider the diagonal embedding
\[ \iota\colon X \hookrightarrow B \times Y_1 \times Y_2.\]

Let $q_a \colon B \times Y_1 \times Y_2 \to B \times Y_a$ be the projection. By construction, we have
\begin{equation} \label{eq-compareEmbeddings} \iota_a = q_a\circ \iota \colon X \to B \times Y_1 \times Y_2 \to B \times Y_a.\end{equation}

The primary independence check is the following:
\begin{lem} \label{lem-independent} In the above notation, there is a natural quasi-isomorphism
\[ R q_{a,*}^{\cO} {\rm Gr}^F_p {\rm DR}_{B\times Y_1\times Y_2/B}(\iota_* \cM^\bullet) \simeq {\rm Gr}^F_p {\rm DR}_{B\times Y_a/B}(\iota_{a,*} \cM^\bullet)\]
for any $a\in \{1,2\}$ and $p\in \Z$.
\end{lem}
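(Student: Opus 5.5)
Throughout, $\cM^\bullet$ denotes the complex of filtered $\cD$-modules underlying $\iota_* M^\bullet$ on $B\times Y_1\times Y_2$, so that $q_{a,+}$ of it underlies $\iota_{a,*}M^\bullet$ by \eqref{eq-compareEmbeddings}. The plan is to treat $q_a$ as a smooth projection over $B$ and use Saito's compatibility of the graded de Rham complex with direct images, in a relative version. Fix $a=1$ and write $q=q_1\colon B\times Y_1\times Y_2\to B\times Y_1$, which is the base change of $Y_2\to{\rm pt}$. Two facts are needed: the direct image $q_+(\cM^\bullet,F)$ is computed as $Rq_*({\rm DR}_{q}(\cM^\bullet,F))$, the relative de Rham complex in the $Y_2$-direction; and the support of the $\cM^\bullet$ is the subvariety $X$, on which $q$ restricts to the closed embedding $\iota_1$ and is therefore proper. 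Properness gives strictness, so $\iota_{1,*}M^\bullet$ has underlying filtered complex $(q_+\cM^\bullet,F)$ in the filtered derived category, and ${\rm Gr}^F$ commutes with $q_+$.

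Next I would build the relative-over-$B$ analogue of the transitivity isomorphism \eqref{eq-iterateDR}. Using $\cT_{B\times Y_1\times Y_2/B}=\cT_{Y_1}\oplus\cT_{Y_2}$ (pulled back), the relative Spencer complex over $B$ factors as an iterated complex:
\[{\rm DR}_{B\times Y_1\times Y_2/B}(\cM^\bullet,F)\simeq {\rm DR}_{q^{-1}\cD_{B\times Y_1}/B}\,{\rm DR}_{B\times Y_1\times Y_2/B\times Y_1}(\cM^\bullet,F).\]
The same Koszul argument used for \eqref{eq-iterateDR} works here, splitting $\bigwedge^k$ of a direct sum. Applying $Rq_*$, and using that $Rq_*$ commutes with the outer Spencer construction (whose terms are tensors with locally free $\cO_{B\times Y_1}$-modules pulled back by $q$), I get
\[Rq_*{\rm DR}_{B\times Y_1\times Y_2/B}(\cM^\bullet,F)\simeq{\rm DR}_{B\times Y_1/B}(Rq_*{\rm DR}_q(\cM^\bullet,F))\simeq{\rm DR}_{B\times Y_1/B}(q_+(\cM^\bullet,F)).\]
This is an isomorphism of filtered complexes, and the shift conventions $F[r]$ add up because the relative dimensions add. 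Taking ${\rm Gr}^F_p$, which commutes with $Rq_*$ and is exact on filtered complexes, gives the claim. Here $Rq_*$ agrees with $Rq^{\cO}_*$ on ${\rm Gr}^F$, since those are $\cO$-linear complexes supported on $X$.

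The main obstacle is the strictness and filtered-level bookkeeping, i.e.\ justifying ${\rm Gr}^F_p Rq_*(\cdot)\simeq Rq_*{\rm Gr}^F_p(\cdot)$. Here $q$ itself is not proper, only proper on the support. I would first try to work entirely on the graded side to avoid filtered issues. Using \eqref{eq-relative-symbol-tensor}, ${\rm Gr}^F{\rm DR}_{\ldots/B}$ equals $\cN^\bullet\otimes^L_{\cA_{Z/B}}\cO_Z$. Factoring $\cA_{Z/B}=\cA_{Y_1}\otimes\cA_{Y_2}$ (pulled back) expresses it as $(\cN^\bullet\otimes^L_{\cA_{Y_2}}\cO)\otimes^L_{\cA_{Y_1}}\cO$. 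Then I would invoke Saito's formula ${\rm Gr}^F q_+\cM\simeq Rq_*(\cN\otimes^L_{\cA_{Y_2}}\cO)$ (\cite{SaitoMHP}*{2.3}), valid since $q$ is proper on the support. That reduces everything to a projection formula for the remaining Koszul tensor.
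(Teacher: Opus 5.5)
Your proposal is correct and follows the paper's argument: identify $\iota_{a,*}(\cM^\bullet,F)$ with $q_{a,+}\iota_*(\cM^\bullet,F)=Rq_{a,*}{\rm DR}_{B\times Y_1\times Y_2/B\times Y_a}(\iota_*(\cM^\bullet,F))$ and apply ${\rm Gr}^F_p{\rm DR}_{B\times Y_a/B}$. Your iterated-Spencer factorization and projection-formula step spell out what the paper leaves implicit, but one point is misplaced: the commutation ${\rm Gr}^F_p Rq_* \simeq Rq_*{\rm Gr}^F_p$ is formal in the filtered derived category (compute with filtered injective resolutions), so properness of $q_a$ on the support $X$, and the attendant strictness, enters only through identifying $q_{a,+}\iota_*$ with $\iota_{a,*}$ at the Hodge module level, not through that commutation.
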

\begin{proof} The relation \eqref{eq-compareEmbeddings} gives a natural filtered isomorphism
\[ \iota_{a,*}(\cM^\bullet,F) \simeq q_{a,+}( \iota_*(\cM^\bullet,F)).\]

By definition, the right-hand side is defined as
\[ R q_{a,*}^{\cO}( {\rm DR}_{B \times Y_1 \times Y_2/ B\times Y_a}(\iota_*(\cM^\bullet,F))).\]

If we then apply ${\rm Gr}^F_p {\rm DR}_{B \times Y_a/B}(-)$ to both sides of this filtered quasi-isomorphism, we get the desired quasi-isomorphism.
\end{proof}

For three factorizations $\iota_a \colon X \hookrightarrow B \times Y_a$, a similar triple-product argument shows that the comparison quasi-isomorphisms are compatible, in the obvious way.

\begin{defi} Let $f\colon X \to B$ be smooth-factorizable and let $M^\bullet \in D^b({\rm MHM}(X))$. We define for any $p \in \Z$ the \emph{$p$th relative de Rham complex}
\[ {\rm Gr}^F_p {\rm DR}_{X/B}(M^\bullet) = {\rm Gr}^F_p {\rm DR}_{B \times Y/B}(\cM^\bullet),\]
for any smooth factorization $\iota\colon X \hookrightarrow B \times Y$ of $f$. Here $(\cM^\bullet,F)$ is a bounded complex of filtered $\cD_{B\times Y}$-modules underlying $\iota_* M^\bullet$.
\end{defi}

We also have the comparison for open subsets on the base:
\begin{lem} Let $f \colon X \to B$ be a smooth-factorizable morphism. Let $V\subseteq B$ be a non-empty open subset with inverse image $j\colon f^{-1}(V) \to X$. Then for all $p\in \Z$ and $M^\bullet \in D^b({\rm MHM}(X))$, the natural map
\[ j^* {\rm Gr}^F_p {\rm DR}_{X/B}(M^\bullet) \to {\rm Gr}^F_p {\rm DR}_{f^{-1}(V)/V}(j^*(M^\bullet))\]
is a quasi-isomorphism.  
\end{lem}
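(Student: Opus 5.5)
The plan is to reduce everything to the fact that every construction in sight commutes with restriction to open subsets. Fix a smooth factorization $\iota\colon X \hookrightarrow B\times Y$ of $f$. Then $f^{-1}(V) = \iota^{-1}(V\times Y)$. The restricted embedding $\iota_V\colon f^{-1}(V)\hookrightarrow V\times Y$ is a smooth factorization of $f|_{f^{-1}(V)}$, so by the definition of ${\rm Gr}^F_p{\rm DR}_{f^{-1}(V)/V}$ (and \lemmaref{lem-independent}, which makes the choice irrelevant) we may compute the target using $\iota_V$. Write $J\colon V\times Y\hookrightarrow B\times Y$ for the open inclusion. The first step is to observe that $J^*\iota_* M^\bullet \simeq \iota_{V,*}\, j^* M^\bullet$ in $D^b({\rm MHM}(V\times Y))$. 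This is base change for the open immersion, which is the cartesian square formed by $j$, $J$, $\iota$, $\iota_V$. Concretely, if $(\cM^\bullet,F)$ is a bounded complex of filtered $\cD_{B\times Y}$-modules underlying $\iota_*M^\bullet$, then $(\cM^\bullet|_{V\times Y},F)$ underlies $\iota_{V,*}j^*M^\bullet$.

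The second step uses the remark after \eqref{eq-relDR} that the relative Spencer complex is functorial and commutes with restriction along open immersions over $B$. Here $J$ is an open immersion over $B$, since it is the base change of $V\subseteq B$. Restriction to an open set is exact and commutes with $\otimes_\cO \bigwedge^k\cT_{Z/B}$, with the differentials $\nabla$, with taking $F_p$, and with totalization. This gives an isomorphism of complexes
\[ J^*\,{\rm Gr}^F_p{\rm DR}_{B\times Y/B}(\cM^\bullet) \cong {\rm Gr}^F_p{\rm DR}_{V\times Y/V}(\cM^\bullet|_{V\times Y}).\]
By \eqref{eq-SuppX}, both sides are supported on $X$, respectively on $f^{-1}(V)$, and are regarded as objects of $D^b_{\rm coh}(\cO_X)$. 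Restricting $J^*$ to sheaves supported on $X$ is the same as applying $j^*$. So the displayed isomorphism is the asserted natural map $j^*{\rm Gr}^F_p{\rm DR}_{X/B}(M^\bullet)\to {\rm Gr}^F_p{\rm DR}_{f^{-1}(V)/V}(j^*M^\bullet)$, and in particular it is a quasi-isomorphism.

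The only point needing care is the naturality and well-definedness of the comparison map. The map must be independent of the chosen factorization, that is, compatible with the quasi-isomorphisms of \lemmaref{lem-independent}. To check this, I would note that the diagonal embedding into $B\times Y_1\times Y_2$ restricts to the diagonal embedding into $V\times Y_1\times Y_2$. I would also note that $Rq_{a,*}$ commutes with restriction to $V$, because $q_a$ is a morphism over $B$ and restriction to the open set $V\times Y_1\times Y_2 = q_a^{-1}(V\times Y_a)$ commutes with derived pushforward. Filtered direct image $q_{a,+}$ commutes with open restriction on the target for the same reason, so the isomorphisms of the second step fit into commutative squares with those of \lemmaref{lem-independent}.

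I expect this bookkeeping with derived pushforward under restriction to be the main, though mild, obstacle. I would handle it by flat base change for the open immersion $V\times Y_a\hookrightarrow B\times Y_a$.
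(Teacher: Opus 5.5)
Your proposal is correct and matches the paper's argument: restrict the smooth factorization to $V\times Y$, then use that both the filtered $\cD$-module underlying $\iota_*M^\bullet$ and the relative Spencer complex commute with restriction along the open immersion $V\times Y\hookrightarrow B\times Y$. You also check that this comparison is compatible with the independence-of-factorization isomorphisms of \lemmaref{lem-independent}, which the paper leaves implicit; your treatment of it via the restricted diagonal embedding and flat base change for open immersions is sound.
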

\begin{proof} A smooth factorization of $f$ induces one for $f\vert_{f^{-1}(V)}$, and so the claim is clear by restricting the relative Spencer complex.
\end{proof}

\begin{rmk} When $f\colon X \to B = {\rm Spec}(\C)$ is the constant map, smooth-factorizability of $f$ is equivalent to $X$ being smooth-embeddable. In this case,
\[ {\rm Gr}^F_p {\rm DR}_{X/B}(M^\bullet) = {\rm Gr}^F_p {\rm DR}_{X}(M^\bullet)\]
is the usual associated graded piece of the de Rham complex.
\end{rmk}

We give the definition of the relative (intersection) Du Bois complexes from the introduction.

\begin{defi} Let $f\colon X \to B$ be a smooth-factorizable morphism with $B$ smooth and connected. 

We define
\[ \underline{\Omega}_{X/B}^p \coloneqq \mathbb D_X( {\rm Gr}^F_p {\rm DR}_{X/B}(\mathbf D_X(\Q_X^H)))[p].\]

Similarly, we define
\[ {\rm I}\underline{\Omega}_{X/B}^p \coloneqq \mathbb D_X( {\rm Gr}^F_p {\rm DR}_{X/B}(\mathbf D_X({\rm IC}_X^H)))[p-\dim X].\]

The morphism $\Q_X^H[\dim X] \to {\rm IC}_X^H$ induces a map
\[ \mathbf D({\rm IC}_X^H)[\dim X] \to \mathbf D(\Q_X^H)\]
and thus, by functoriality of ${\rm Gr}^F_p{\rm DR}_{X/B}(-)$, a morphism
\[ \underline{\Omega}_{X/B}^p \to {\rm I}\underline{\Omega}_{X/B}^p.\]
\end{defi} 

\begin{rmk} \label{rmk-LowestPiece} The construction leads to the vanishing
\[ \underline{\Omega}_{X/B}^p = {\rm I}\underline{\Omega}_{X/B}^p = 0 \text{ for all }p < 0.\]

Indeed, for any closed embedding $\iota \colon X \hookrightarrow Z$ with $Z$ smooth, let $(\cM^\bullet,F)$ underlie $\iota_* \mathbf D(\Q_X^H)$ and let $(\cL,F)$ underlie $\iota_* \mathbf D({\rm IC}_X^H)$. Then
\begin{equation} \label{eq-HodgeVanishing} F_{-1} \cM^\bullet \simeq 0 = F_{-1}\cL.\end{equation}

Indeed,
\[ \min \{p \mid F_p \cM^\bullet \neq 0\} = \min\{p \mid {\rm Gr}^F_p {\rm DR}(\cM^\bullet)\neq 0\}\]
and the same holds for $(\cL,F)$. Thus, the claimed vanishing follows from the quasi-isomorphisms
\[ {\rm Gr}^F_p {\rm DR}(\cM^\bullet) \cong {\rm Gr}^F_p {\rm DR}_X(\mathbf D(\Q_X^H)) \cong \mathbb D_X {\rm Gr}^F_{-p} {\rm DR}_X(\Q_X^H) \cong \mathbb D_X(\underline{\Omega}_X^p[-p]),\]
\[ {\rm Gr}^F_p {\rm DR}_Z(\cL) \cong {\rm Gr}^F_p {\rm DR}_X(\mathbf D_X({\rm IC}_X^H)) \cong \mathbb D_X{\rm Gr}^F_{-p} {\rm DR}_X({\rm IC}_X^H) \cong \mathbb D_X({\rm I}\underline{\Omega}_X^p[\dim X-p]),\]
and the right-hand sides vanish for $p < 0$.

If we apply this with $Z = B \times Y$ for some smooth factorization of $f$, then the construction of the relative Spencer complex implies
\[ {\rm Gr}^F_p {\rm DR}_{B\times Y/B}(\cM^\bullet) = {\rm Gr}^F_p {\rm DR}_{B\times Y/B}(\cL) = 0 \text{ for all } p < 0,\]
as claimed.
\end{rmk}

\section{The absolute-relative filtration} \label{sec-filtration}
In this section, we combine the duality with a standard filtration result for a double complex to filter the Du Bois complex by twisted relative Du Bois pieces. As before, let $Z = B \times Y$ for $B,Y$ smooth varieties and let $\pi \colon Z \to B$ be the first projection. Note there is a natural isomorphism
\[ \cD_Z \cong \cD_B \boxtimes \cD_Y\]
and
\[ \bigwedge^\ell \cT_Z = \bigoplus_{j =0}^\ell \left(\bigwedge^j \cT_B \boxtimes \bigwedge^{\ell-j} \cT_Y\right)= \bigoplus_{j=0}^\ell \left(\pi^*(\bigwedge^j \cT_B) \otimes_{\cO_Z} \bigwedge^{\ell-j} \cT_{Z/B}\right).\]

\begin{lem} \label{lem-ColumnFiltration} Let $(\cM^\bullet,F)$ be a bounded complex of filtered right $\cD_Z$-modules. Then, for any $p\in \Z$, there are morphisms of bounded complexes of $\cO_Z$-modules
\[ H_{-1} = 0 \to H_0 {\rm Gr}^F_p {\rm DR}_Z(\cM^\bullet) \to H_1 {\rm Gr}^F_p {\rm DR}_Z(\cM^\bullet) \to \ldots \to H_{\dim B} {\rm Gr}^F_p {\rm DR}_Z(\cM^\bullet) = {\rm Gr}^F_p {\rm DR}_Z(\cM^\bullet) \]
such that, writing $H_r \coloneqq H_r {\rm Gr}^F_p {\rm DR}_Z(\cM^\bullet)$, we have a quasi-isomorphism for all $0 \leq j \leq \dim B$:
\begin{equation} \label{eq-GrHCompute} {\rm cone}(H_{j-1} \to H_j) \simeq {\rm Gr}^F_{p-j} {\rm DR}_{Z/B}(\cM^\bullet) \otimes \pi^*(\bigwedge^j \cT_B)[j].\end{equation}

Given any morphism $\varphi \colon (\cM_1^\bullet,F) \to (\cM_2^\bullet,F)$, the morphism induced by applying ${\rm Gr}^F_p {\rm DR}_Z(-)$ is $H$-filtered in the obvious sense.
\end{lem}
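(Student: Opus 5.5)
The idea is to make the double-complex structure from \eqref{eq-iterateDR} explicit at the level of terms and then filter by the base-direction degree. First treat a single filtered right $\cD_Z$-module $(\cM,F)$. Using the decomposition of $\bigwedge^\ell\cT_Z$ above, the degree $-\ell$ term of ${\rm DR}_Z(\cM,F)$ splits as
\[ \bigoplus_{j=0}^{\ell} (\cM,F[\ell])\otimes\pi^*\Big(\bigwedge^j\cT_B\Big)\otimes\bigwedge^{\ell-j}\cT_{Z/B}. \]
In local coordinates $b_1,\dots,b_{d_B}$ on $B$ and $y_1,\dots,y_r$ on $Y$, the Spencer differential is a sum $\nabla=\nabla_{Y}+\nabla_{B}$. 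Here $\nabla_Y$ contracts a $\partial_{y}$-vector and acts by it on $\cM$, and so preserves $j$. The other piece $\nabla_B$ contracts a $\partial_b$-vector, which lowers $j$ by one. The two commute up to the Koszul sign, because $[\partial_{b_i},\partial_{y_k}]=0$ and the coefficients of the product decomposition are pulled back from the factors. This is exactly the identification ${\rm DR}_Z\simeq{\rm DR}_{\pi^{-1}\cD_B}{\rm DR}_{Z/B}$ of \eqref{eq-iterateDR}, and it is coordinate-independent, since both the splitting of $\bigwedge^\bullet\cT_Z$ and the sub-complexes below are defined intrinsically.

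Define $H_r\subseteq{\rm DR}_Z(\cM,F)$ to be the sum of the summands with $j\le r$. Since $\nabla_Y$ preserves $j$ and $\nabla_B$ lowers it, $H_r$ is a subcomplex, and it is filtered, because the Hodge shift $F[\ell]$ depends only on the total degree. Applying ${\rm Gr}^F_p$ gives subcomplexes $H_r{\rm Gr}^F_p{\rm DR}_Z$ with $H_{-1}=0$ and $H_{d_B}$ equal to everything. The quotient $H_j/H_{j-1}$ is the complex of summands with fixed $j$, carrying only the differential $\nabla_Y$. For fixed $j$, the term in total degree $-\ell$ is
\[ {\rm Gr}^F_{p-\ell}\cM\otimes\bigwedge^{\ell-j}\cT_{Z/B}\otimes\pi^*\Big(\bigwedge^j\cT_B\Big). \]
Set $k=\ell-j$. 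Then ${\rm Gr}^F_{(p-j)-k}\cM\otimes\bigwedge^k\cT_{Z/B}$ is the degree $-k$ term of ${\rm Gr}^F_{p-j}{\rm DR}_{Z/B}(\cM)$, placed here in degree $-k-j$. This gives
\[ H_j/H_{j-1}\cong {\rm Gr}^F_{p-j}{\rm DR}_{Z/B}(\cM)\otimes\pi^*\Big(\bigwedge^j\cT_B\Big)[j]. \]
Since $H_{j-1}\to H_j$ is a termwise split injection, its cone is quasi-isomorphic to this quotient, which yields \eqref{eq-GrHCompute}. For a bounded complex $\cM^\bullet$ we apply the same construction to the totalization. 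Each $H_r$ is taken on every $\cM^i$, and the differentials of $\cM^\bullet$ are $\cD_Z$-linear, so they commute with $\nabla_B$ and $\nabla_Y$ and preserve $j$.

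Functoriality is then immediate. A morphism $\varphi$ of filtered complexes acts termwise as $\varphi\otimes{\rm id}$ on each summand, so it preserves $j$ and the filtration, and on graded pieces it induces ${\rm Gr}^F_{p-j}{\rm DR}_{Z/B}(\varphi)\otimes{\rm id}$.

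The main obstacle I expect is the sign and commutation bookkeeping. One has to check that $\nabla_B$ genuinely maps the $j$-summand into the $(j-1)$-summand with no component back into $j$. In particular, the right action of $\partial_{b_i}$ on $\cM$ must not produce $\cT_{Z/B}$-terms. This holds because $\partial_{b_i}$ and $\partial_{y_k}$ commute on a product, but it would fail for a general smooth $\pi$, which is why the lemma is stated only for $Z=B\times Y$. I would verify it first in coordinates via the explicit right Spencer formula.
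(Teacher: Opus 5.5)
Your proposal is correct and is essentially the paper's argument. The paper tensors $\bigoplus_a {\rm Gr}^F_a{\rm DR}_{Z/B}(\cM^\bullet)$ with the graded Koszul resolution of $\cO_Z$ over $\pi^*(\cA_B)$ and filters by base wedge-power degree; termwise this is your column filtration by $j$, only carried out at the symbol level rather than on ${\rm DR}_Z(\cM,F)$ itself.

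Your aside that the argument would fail for a general smooth $\pi$ is too pessimistic: filtering by the number of vertical factors is intrinsic, and local commuting coordinates still exist. It plays no role here, though.
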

\begin{proof}
Define
\[
\cC^\bullet_{Z/B}
\coloneqq 
\bigoplus_a{\rm Gr}_a^F{\rm DR}_{Z/B}(\cM^\bullet).
\]
By \eqref{eq-graded-Spencer-transitivity},
\[
\bigoplus_p{\rm Gr}_p^F{\rm DR}_Z(\cM^\bullet)
\simeq
\cC^\bullet_{Z/B}
\otimes_{\pi^*(\cA_B)}^L\cO_Z.
\]
We can compute the right-hand side using the graded Koszul resolution of
$\cO_Z$ over $\pi^*(\cA_B)$, and filter the resulting resolution by the degree of the wedge powers. If $H_r$ denotes the subcomplex consisting of wedge-power degrees $0\leq j\leq r$, then
\[H_r/H_{r-1} \simeq\cC^\bullet_{Z/B}(-r)\otimes_{\cO_Z} \pi^*\left(\bigwedge^r\cT_B\right)[r].\]
Taking degree $p$ gives \[{\rm Gr}_r^H{\rm Gr}_p^F{\rm DR}_Z(\cM^\bullet)
\simeq {\rm Gr}_{p-r}^F{\rm DR}_{Z/B}(\cM^\bullet)
\otimes_{\cO_Z} \pi^*\left(\bigwedge^r\cT_B\right)[r].\]
\end{proof}

\begin{rmk} Using the diagonal embedding argument as in \lemmaref{lem-independent}, we can see that the $G$-filtration is independent of the factorization of the map $f\colon X \to B$.
\end{rmk}

Then \theoremref{thm-filtration} is a consequence of the definition in terms of duality:
\begin{proof}[Proof of \theoremref{thm-filtration}]
As $f\colon X \to B$ is smooth-factorizable, choose a factorization $f = \pi \circ \iota \colon X \to Z \to B$ through $Z = B \times Y$ with $Y$ smooth.

Consider the morphism $\Q_X^H[\dim X] \to {\rm IC}_X^H$ in $D^b({\rm MHM}(X))$, which we pushforward and dualize to get a morphism
\[ \iota_* \mathbf D({\rm IC}_X^H) \to \iota_* \mathbf D(\Q_X^H[\dim X]) \text{ in } D^b({\rm MHM}(Z)).\]

In terms of underlying filtered $\cD_Z$-modules, we can represent this as a morphism
\[ (\cM_1^\bullet,F) \xrightarrow[]{\varphi} (\cM_2^\bullet,F).\]

For $p\in \Z$ fixed, if we apply ${\rm Gr}^F_p {\rm DR}_Z(-)$ to this morphism, we get a map
\[ {\rm Gr}^F_p {\rm DR}(\cM_1^\bullet) \to {\rm Gr}^F_p {\rm DR}(\cM_2^\bullet),\]
where both the source and target are $H$-filtered, and this morphism preserves the filtration in the sense that it induces morphisms
\[ H_{j} {\rm Gr}^F_p {\rm DR}(\cM_1^\bullet) \to H_{j} {\rm Gr}^F_p {\rm DR}(\cM_2^\bullet),\]
for all $j\in \{0,\ldots, \dim B\}$. 

For $i\in \{1,2\}$, $p\in \Z$, and $j\in \{0,\ldots,\dim B\}$, we consider the quotient map
\[ {\rm Gr}^F_p {\rm DR}(\cM_i^\bullet) \to {\rm Gr}^F_p {\rm DR}(\cM_i^\bullet)/ H_{j}.\]

There are exact triangles for any $0 \leq j \leq \dim B$:
\begin{equation} \label{eq-GrQuotients} {\rm Gr}^H_{j} {\rm Gr}^F_p {\rm DR}_Z(\cM_i^\bullet) \to {\rm Gr}^F_p {\rm DR}(\cM_i^\bullet)/H_{j-1} \to {\rm Gr}^F_p {\rm DR}(\cM_i^\bullet)/H_j \xrightarrow[]{+1} \end{equation}

Define
\[ G_{-j} \mathbb D_X({\rm Gr}^F_p {\rm DR}_Z(\cM_i^\bullet)) \coloneqq \mathbb D_X( {\rm Gr}^F_p{\rm DR}_Z(\cM_i^\bullet)/H_{j-1}),\]
so that we have exact triangles
\begin{equation} \label{eq-GrDual} G_{-j-1}\mathbb D_X({\rm Gr}^F_p {\rm DR}_Z(\cM_i^\bullet))   \to G_{-j}\mathbb D_X({\rm Gr}^F_p {\rm DR}_Z(\cM_i^\bullet))   \to \mathbb D_X( {\rm Gr}^H_{j} {\rm Gr}^F_p {\rm DR}_Z(\cM_i^\bullet)) \xrightarrow[]{+1}.\end{equation}

By construction, we have morphisms
\[ G_{-j} \mathbb D_X( {\rm Gr}^F_p {\rm DR}(\cM_i^\bullet)) \to \mathbb D_X({\rm Gr}^F_p {\rm DR}(\cM_i^\bullet))\]
which form commutative diagrams with the corresponding morphisms induced by $\varphi$.

The proof is complete once we observe that
\[ \mathbb D_X({\rm Gr}^F_{p} {\rm DR}(\cM_1^\bullet)) \simeq {\rm Gr}^F_{-p} {\rm DR}({\rm IC}_X^H) = {\rm I}\underline{\Omega}_X^p[\dim X-p], \]
\[ \mathbb D_X({\rm Gr}^F_{p} {\rm DR}(\cM_2^\bullet)) \simeq {\rm Gr}^F_{-p} {\rm DR}(\Q_X^H[\dim X]) = \underline{\Omega}_X^p[\dim X-p],\]
the induced morphism is the canonical morphism between the two, and we have the isomorphisms (by combining \eqref{eq-GrHCompute} and \eqref{eq-GrDual}):
\[ {\rm Gr}^G_{-j} {\rm I}\underline{\Omega}_X^p \simeq \mathbb D_X(  {\rm Gr}^H_{j} {\rm Gr}^F_p {\rm DR}(\cM_1^\bullet))[p-\dim X] \simeq \mathbb D_X( {\rm Gr}^F_{p-j} {\rm DR}_{Z/B}(\cM_1^\bullet) \otimes \bigwedge^j \pi^*(\cT_B)[j])[p-\dim X], \]
\[ {\rm Gr}^G_{-j} \underline{\Omega}_X^p \simeq \mathbb D_X( {\rm Gr}^H_{j} {\rm Gr}^F_p {\rm DR}(\cM_2^\bullet))[p-\dim X] \simeq \mathbb D_X( {\rm Gr}^F_{p-j} {\rm DR}_{Z/B}(\cM_2^\bullet) \otimes \bigwedge^j \pi^*(\cT_B)[j])[p-\dim X].\]

Using the fact that $\pi^*(\cT_B)$ is a vector bundle on $Z$, we can rewrite the right-hand sides of these isomorphisms as in the theorem statement.
\end{proof}

\section{Restriction to Fibers} \label{sec-BC} In this section, we construct a base-change morphism via filtered $\cD$-module theory. We then prove generic base change \theoremref{thm-generalBC} and finally discuss the base change results of \cite{CDOHigher}*{Sec. 9} and their connection with our relative Du Bois complexes.

\subsection{Filtered $\cD$-module Restriction Maps}
We recall the base-change morphism for filtered $\cD$-modules underlying mixed Hodge modules \cite{SchnellNeron}*{Lem. 2.17} (see also \cite{SchnellWeak}*{Lem. 3.2}). The morphism is essentially induced by the counit map
\[ i_* i^! \cM^\bullet \to \cM^\bullet.\]

Our statement differs from Schnell's because we use right $\cD$-modules. This explains why we do not shift the filtration on the domain of the morphism and why we use $Li^!_{\cO}(-)$ (for $\cO$-modules) in place of $Li^*(-)$.

\begin{lem} Let $i \colon Y \hookrightarrow Z$ be a closed embedding of smooth varieties. Let $(\cM^\bullet,F)$ be a bounded complex of filtered $\cD_Z$-modules underlying $M^\bullet \in D^b({\rm MHM}(Z))$. Then for all $p \in \Z$, there are canonical morphisms
\[ \eta_{Y,Z,p}^{\cM^\bullet} \colon F_p i^!(\cM^\bullet) \to Li^!( F_p \cM^\bullet),\]
\[ \chi_{Y,Z,p}^{\cM^\bullet}  \colon {\rm Gr}^F_p i^!(\cM^\bullet) \to Li^!( {\rm Gr}^F_p \cM^\bullet)\]
inducing a morphism of exact triangles
\[ \begin{tikzcd} F_{p-1} i^!(\cM^\bullet) \ar[r] \ar[d,"\eta_{p-1}"] & F_p i^!(\cM^\bullet) \ar[r] \ar[d,"\eta_p"] & {\rm Gr}^F_p i^!(\cM^\bullet) \ar[d,"\chi_p"] \ar[r,"+1"] & {} \\ Li^!( F_{p-1} \cM^\bullet) \ar[r] &  Li^!( F_p \cM^\bullet) \ar[r] &  Li^!( {\rm Gr}^F_p \cM^\bullet) \ar[r,"+1"] & {}\end{tikzcd}. \]

The morphisms are compatible with composition in the sense that, if $i = i_2 \circ i_1 \colon Y \hookrightarrow  \widetilde{Z} \hookrightarrow Z$ is a factorization of the map $i$ through another smooth closed subvariety $\widetilde{Z}\subseteq Z$, then
\[ \eta_{Y,Z,p}^{\cM^\bullet}  = L i_1^!(\eta_{\widetilde{Z},Z,p}^{\cM^\bullet})\circ \eta_{Y,\widetilde{Z},p}^{i_2^! \cM^\bullet},\]
\[ \chi_{Y,Z,p}^{\cM^\bullet}  = L i_1^!(\chi_{\widetilde{Z},Z,p}^{\cM^\bullet})\circ \chi_{Y,\widetilde{Z},p}^{i_2^! \cM^\bullet}.\]
\end{lem}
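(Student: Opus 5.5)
Because the composition statement is local and functorial, I would reduce to a codimension-one embedding. Work locally with coordinates on $Z$ such that $Y=\{t_1=\dots=t_c=0\}$. Then factor $i$ as a chain of smooth hypersurface embeddings $Y=Z_c\subset Z_{c-1}\subset\dots\subset Z_0=Z$. I would construct $\eta$ and $\chi$ for a smooth hypersurface $Y=\{t=0\}$ and define the general map as the composite along the chain. Two things then remain: independence of the chosen chain (and of the coordinates), and the composition formula. Both follow once the hypersurface maps are canonical and compatible with composing two hypersurface steps. Gluing from local to global is automatic, since the morphisms are canonical, hence compatible on overlaps.

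In the hypersurface case $i^!$ of a mixed Hodge module is computed by Saito's $V$-filtration along $t$. For right $\cD$-modules and up to the standard conventions, $i^!\cM$ is represented by the two-term complex
\[ [\,{\rm gr}^V_{0}\cM \xrightarrow{\;t\;} {\rm gr}^V_{-1}\cM\,]\]
(with the appropriate indexing), carrying the induced filtrations $F_p$. Meanwhile $Li^!(F_p\cM)=R\cH om_{\cO_Z}(\cO_Y,F_p\cM)$ is computed by the Koszul complex $[F_p\cM\xrightarrow{t}F_p\cM]$, suitably shifted. To get $\eta_p$ I would compare $V_0F_p\cM\xrightarrow{t}V_{-1}F_p\cM$, which maps to the Koszul complex by inclusion, and then pass to quotients by $V_{<0}$ and $V_{<-1}$. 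The point is that $t\colon V_{<0}\to V_{<-1}$ is a filtered isomorphism for mixed Hodge modules, by bistrictness $tF_pV_\alpha=F_pV_{\alpha-1}$ for $\alpha<0$. Hence the subcomplex $[V_{<0}F_p\xrightarrow{t} V_{<-1}F_p]$ is acyclic, and the inclusion of $[V_0F_p\xrightarrow{t}V_{-1}F_p]$ into the Koszul complex factors through a quasi-isomorphism with the ${\rm gr}^V$ complex, which yields $\eta_p$. This is how I read Schnell's construction (\cite{SchnellNeron}*{Lem. 2.17}) translated to right modules; it is also why no filtration shift appears, because the twist by $\det\cN$ and the shift are absorbed into $Li^!$. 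For complexes $\cM^\bullet$ one applies the same construction termwise and totalizes, noting that the $V$-filtration is functorial for morphisms of mixed Hodge modules. Since $\eta_{p-1}$ and $\eta_p$ come from inclusions of honest filtered complexes, taking quotients gives $\chi_p$ together with the morphism of exact triangles at the chain level.

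Composition for two hypersurfaces $t_1,t_2$ reduces to comparing two descriptions of the same bi-Koszul double complex. Computing via the multi-index $V$-filtration uses that $V$-filtrations along $t_1$ and $t_2$ are compatible for mixed Hodge modules (Saito's transversality, or the fact that $i_1^!\cM$ underlies a mixed Hodge module whose $V$-filtration along $t_2$ is induced). Both composites are then the map $[V_{0}^{(1)}V_0^{(2)}F_p\to\cdots]\hookrightarrow$ Koszul, so they agree.

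The step I expect to be the main obstacle is independence of the choices, especially of coordinates and of the ordering of a general chain. For this I would rely on the canonical description of $\eta$ as the map induced by the counit $i_*i^!\cM\to\cM$. On $F_p$ this counit lands in sections annihilated by the ideal of $Y$, or in the derived sense factors through $R\cH om(\cO_Y,F_p\cM)$, and that characterization is intrinsic. In other words, I would show $\eta_p$ is the unique map whose composite with $Li^!F_p\cM\to Li^!\cM$ is the adjoint of the filtered counit. Uniqueness would follow from strictness, and that gives both independence and composition formally.
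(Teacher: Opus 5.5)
\textbf{There is a genuine gap: your construction does not establish independence of the chosen flag, and the composition formula depends on it.}

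Your codimension-one construction is correct. The roof $[F_p{\rm Gr}_V^0\cM\xrightarrow{t}F_p{\rm Gr}_V^1\cM]\leftarrow[F_pV^0\cM\xrightarrow{t}F_pV^1\cM]\to[F_p\cM\xrightarrow{t}F_p\cM]$ has its left arrow a quasi-isomorphism because $t\colon F_p\cM\cap V^{>0}\cM\cong F_p\cM\cap V^{>1}\cM$. This is exactly how the paper later computes the map, in the proof of Proposition~\ref{prop-VContainBC}.

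The problem is the passage to higher codimension. You \emph{define} $\eta$ by composing along a chosen coordinate flag. So canonicity, gluing on overlaps, and the composition formula all reduce to independence of the flag, and neither tool you propose for that works.

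First, your bi-Koszul comparison needs the $V$-filtration along $t_2$ on $i_1^!\cM$ to be induced from $V_{t_2}$ on $\cM$. This fails in general. Let $\cM$ be the $\delta$-module of the line $\{t_1=t_2\}\subset\C^2$. Then $\{t_2=0\}$ is non-characteristic, so $V^0_{t_2}\cM=V^{>0}_{t_2}\cM=\cM$, and the induced filtration on $i_1^!\cM$ has vanishing ${\rm Gr}^0$. But $i_1^!\cM$ is, up to shift, the $\delta$-module of the origin in $\{t_1=0\}$. Its $i_2^!$ is nonzero in degree $0$, so its actual ${\rm Gr}^0_{V_{t_2}}$ is nonzero.

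Second, your uniqueness characterization is unsupported. From the triangle $Li^!F_p\cM\to Li^!\cM\to Li^!(\cM/F_p\cM)\xrightarrow{+1}$, two lifts of a given map $F_pi^!\cM\to Li^!\cM$ differ by an element in the image of $\operatorname{Hom}(F_pi^!\cM,Li^!(\cM/F_p\cM)[-1])$. Nothing in your argument forces this Hom group to vanish, and strictness of $F$ does not address it.

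The missing idea is to make $\eta_p$ intrinsic from the outset, inducing it from the counit $i_+i^!\cM\to\cM$ as the paper does following Schnell, rather than from coordinates. Concretely:
\begin{itemize}
\item For right modules, $F_pN\otimes 1\subseteq F_p(i_+N)$ is an $\cO_Z$-submodule killed by the ideal of $Y$. Applying $F_p$ to the filtered counit therefore gives $i_*F_pi^!\cM\to F_p\cM$ in $D(\cO_Z)$, and $\eta_p$ is its adjoint under $i_*\dashv Li^!$.
\item $\chi_p$ is obtained the same way from ${\rm Gr}^F_p$. The morphism of triangles is automatic, since everything comes from a single filtered morphism.
\item No choices enter, so canonicity and gluing come for free.
\item The composition formula is the factorization of the counit of $i=i_2\circ i_1$ through $i_{2+}i_2^!\cM$. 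One combines this with naturality of the inclusion $i_{2*}F_p\subseteq F_p i_{2+}$ and compatibility of the adjunctions $i_*\dashv Li^!$ with composition. Taking adjoints of the two factors yields exactly $Li_1^!(\eta_{\widetilde Z,Z,p}^{\cM})\circ\eta_{Y,\widetilde Z,p}^{i_2^!\cM}$.
\end{itemize}
Your $V$-filtration roof then becomes a local formula for this canonical map, which is how the paper uses it, rather than its definition.
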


The last statement follows from the compatibility of counits for the adjunction.

\subsection{Relative de Rham Restriction}
We now define a base-change morphism for (relative) de Rham complexes. 

Let $Z = B \times Y$, with $B$ and $Y$ both smooth, and let $k_b\colon Z_b = \{b\} \times Y \hookrightarrow Z$ denote the fiber inclusion. 

\begin{lem} \label{lem-SmoothFactBC} Let $(\cM^\bullet,F)$ be a bounded complex of filtered right $\cD_Z$-modules underlying a bounded complex of mixed Hodge modules on $Z$. Then for every $p$, there is a natural base-change morphism
\begin{equation} \label{eq-BCMap} \gamma_{b,Z,p}^{\cM^\bullet} \colon {\rm Gr}^F_p {\rm DR}_{Z_b}(k_b^!(\cM^\bullet)) \to Lk_b^! {\rm Gr}^F_p {\rm DR}_{Z/B}(\cM^\bullet).\end{equation}
\end{lem}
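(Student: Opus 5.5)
The plan is to build $\gamma_{b,Z,p}^{\cM^\bullet}$ by adjunction rather than term by term. For the closed embedding $k_b$, the functor $Lk_b^!(-) = R\cH om_{\cO_Z}(\cO_{Z_b},-)$ is right adjoint to $k_{b,*}$ on $\cO$-modules. So it suffices to produce a natural morphism in $D^b(\cO_Z)$
\[ k_{b,*}\,{\rm Gr}^F_p {\rm DR}_{Z_b}(k_b^!\cM^\bullet) \longrightarrow {\rm Gr}^F_p {\rm DR}_{Z/B}(\cM^\bullet), \]
and then take its adjoint. I will obtain this morphism as a composite of two maps.

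The first map comes from the filtered counit $k_{b,+}k_b^!(\cM^\bullet,F)\to(\cM^\bullet,F)$. This is a morphism in $D^b({\rm MHM}(Z))$, so it is a filtered morphism of complexes of filtered $\cD_Z$-modules up to quasi-isomorphism. This is the same counit that underlies the maps $\eta,\chi$ of the preceding lemma. Applying the functor ${\rm Gr}^F_p{\rm DR}_{Z/B}(-)$, which is functorial on filtered complexes, gives
\[ {\rm Gr}^F_p{\rm DR}_{Z/B}(k_{b,+}k_b^!\cM^\bullet)\to {\rm Gr}^F_p{\rm DR}_{Z/B}(\cM^\bullet). \]

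The second map is a natural morphism
\[ k_{b,*}{\rm Gr}^F_p{\rm DR}_{Z_b}(\cN)\to {\rm Gr}^F_p{\rm DR}_{Z/B}(k_{b,+}\cN) \]
for any filtered complex $(\cN,F)$ on $Z_b$. I will compute it in local coordinates $t_1,\dots,t_{d_B}$ on $B$ centered at $b$. Since $k_b$ is the base change of $\{b\}\hookrightarrow B$, we have
\[ k_{b,+}\cN=\bigoplus_{\alpha}\cN\,\de_t^\alpha,\qquad F_pk_{b,+}\cN=\bigoplus_\alpha F_{p-|\alpha|}\cN\,\de_t^\alpha, \]
and vector fields along $Y$ act only on the $\cN$ factor, because $\cT_{Z/B}|_{Z_b}=\cT_{Z_b}$. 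Hence the relative Spencer complex splits as
\[ {\rm DR}_{Z/B}(k_{b,+}\cN)=\bigoplus_\alpha {\rm DR}_{Z_b}(\cN)\,\de_t^\alpha, \]
with the filtration shifted by $|\alpha|$. The $\alpha=0$ summand is ${\rm DR}_{Z_b}(\cN)$ with its own filtration. I must check that this summand is an $\cO_Z$-subcomplex and not just a $\cO_{Z_b}$-subcomplex. This holds because, for right modules, $(n\,\de_t^\alpha)\,t_i$ equals $(nt_i)\de_t^\alpha$ minus a multiple of $\de_t^{\alpha-e_i}$, and $t_i$ kills $\cN$. So the $t_i$ lower $\alpha$ and kill the $\alpha=0$ summand. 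Taking ${\rm Gr}^F_p$ of the inclusion of the $\alpha=0$ summand gives the second map. Composing it with the first map (with $\cN=k_b^!\cM^\bullet$) and passing to the adjoint gives \eqref{eq-BCMap}. As a sanity check, on each term the result should agree with $\chi_{p-k}\otimes\bigwedge^k\cT_{Z_b}$ from the preceding lemma.

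I expect the main obstacle to be naturality and well-definedness at the level of complexes. The counit is only a morphism in the derived category of mixed Hodge modules. I therefore need to represent it by an honest filtered morphism of complexes of filtered $\cD_Z$-modules, for instance using Saito's filtered quasi-isomorphisms, and check that ${\rm Gr}^F_p{\rm DR}_{Z/B}$ sends filtered quasi-isomorphisms to quasi-isomorphisms. The latter follows from \eqref{eq-relative-symbol-tensor}: a filtered quasi-isomorphism induces a quasi-isomorphism on ${\rm Gr}^F$, hence on the derived tensor product with $\cO_Z$ over $\cA_{Z/B}$. I must also check that the splitting in the second map is independent of the chosen coordinates on $B$. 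This should follow because the $\alpha=0$ summand is intrinsically the subcomplex annihilated by the ideal of $Z_b$, which is the image of $\cN\otimes 1$. With these checks done, naturality in $\cM^\bullet$ follows from the naturality of the counit and of the adjunction.
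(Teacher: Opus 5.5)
Your construction is correct, but it follows a different route from the paper's. The paper never forms ${\rm DR}_{Z/B}(k_{b,+}\cN)$. It takes the graded-piece maps $\chi_p$ of the preceding lemma, which are themselves the counit-plus-adjunction construction at the level of ${\rm Gr}^F$ of $\cD$-modules. It sums them into a graded $\cA_{Z_b}$-linear map $\bigoplus_p {\rm Gr}^F_p k_b^!\cM^\bullet \to Lk_b^!(\bigoplus_p {\rm Gr}^F_p \cM^\bullet)$ and applies $-\otimes^L_{\cA_{Z_b}}\cO_{Z_b}$. Finally it identifies the target with $Lk_b^!$ of \eqref{eq-relative-symbol-tensor}, via the Koszul commutation $(Lk_b^!\cN^\bullet)\otimes^L_{\cA_{Z_b}}\cO_{Z_b}\simeq Lk_b^!(\cN^\bullet\otimes^L_{\cA_{Z/B}}\cO_Z)$. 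You instead apply ${\rm Gr}^F_p{\rm DR}_{Z/B}$ to the counit itself, precompose with the inclusion of the $\alpha=0$ part, and take the $(k_{b,*},Lk_b^!)$-adjoint.

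What your route buys is a cleaner foundation. You only need that ${\rm Gr}^F_p{\rm DR}_{Z/B}$ descends to the filtered derived category, plus the explicit structure of $k_{b,+}$. You also sidestep a point the paper simply asserts: that $\bigoplus_p\chi_p$ is $\cA_{Z_b}$-linear as a morphism in a derived category of graded modules, not just degreewise. Naturality in $\cM^\bullet$ is immediate in your version.

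What the paper's route buys is the link to the $\chi_p$ for free. Degree $p$ of the Koszul complex only involves ${\rm Gr}^F_{p-k}$ with $k\geq 0$, so $\gamma_p$ is a quasi-isomorphism as soon as $\chi_{p'}$ is for all $p'\leq p$. This is exactly how \propositionref{prop-VContainBC} and \corollaryref{cor-BCNonChar} feed into Theorems \ref{thm-generalBC} and \ref{thm-higherBC}. For your $\gamma$ to be usable there, your ``sanity check'' has to be proved rather than expected. You need to show that your composite respects the column (brutal) filtrations of the two Spencer complexes and induces $\chi_{p-k}\otimes\bigwedge^k\cT_{Z_b}$ on the $k$th column. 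Use that $\bigwedge^k\cT_{Z/B}$ is locally free, so $Lk_b^!(\mathcal{G}\otimes\bigwedge^k\cT_{Z/B})\simeq Lk_b^!\mathcal{G}\otimes\bigwedge^k\cT_{Z_b}$, and then induct on the columns.

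Two minor points. First, $(n\de_t^\alpha)t_i=(nt_i)\de_t^\alpha+\alpha_i\, n\,\de_t^{\alpha-e_i}$ carries a plus sign. Second, this term drops into $F_{p-1}$, so on ${\rm Gr}^F$ the $t_i$ act by zero. Your $\alpha$-decomposition is therefore an honest $\cO_Z$-linear splitting after taking ${\rm Gr}^F_p$.
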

\begin{proof} By taking the direct sum over $p$ of the maps $\chi_{Z_b,Z,p}^{\cM^\bullet}$, we get a graded morphism
\[ \bigoplus_{p\in \Z} {\rm Gr}^F_p k_b^!(\cM^\bullet) \to Lk_b^!(\bigoplus_{p\in \Z} {\rm Gr}^F_p \cM^\bullet),\]
which is linear over the graded algebra $\cA_{Z_b} = k_b^* \cA_{Z/B}$. The desired morphism is obtained by derived tensoring over $\cA_{Z_b}$ with $\cO_{Z_b}$, using the graded quasi-isomorphism relation
\[ (L k_b^! \cN^\bullet) \otimes_{\cA_{Z_b}}^L \cO_{Z_b} \simeq Lk_b^! ( \cN^\bullet \otimes^L_{\cA_{Z/B}} \cO_Z),\]
for any graded complex $\cN^\bullet$ over $\cA_{Z/B}$.
\end{proof}

\subsection{Du Bois base-change morphism}
Applying the above construction in a special case gives the Du Bois fiber-restriction morphism for flat, locally smooth-factorizable morphisms.

\begin{prop} For any flat, locally smooth-factorizable morphism $f\colon X \to B$  with $B$ smooth and connected, and any $b \in B$, there are base-change morphisms
\[ Li_b^*(\underline{\Omega}_{f^{-1}(U)/U}^p) \to \underline{\Omega}_{X_b}^p,\]
for all $p\in \Z$, where $b\in U \subseteq B$ is an open neighborhood such that $f^{-1}(U) \to U$ is smooth-factorizable.
\end{prop}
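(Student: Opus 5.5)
We restrict to an open neighborhood $b\in U\subseteq B$ over which $f$ is smooth-factorizable. Replacing $X$ by $f^{-1}(U)$, we may assume $f$ itself is smooth-factorizable, with factorization $\iota\colon X\hookrightarrow Z=B\times Y\xrightarrow{\pi} B$. The fiber $X_b$ then embeds as $\iota_b\colon X_b\hookrightarrow Z_b=\{b\}\times Y$, and $Z_b$ is smooth. Let $(\cM^\bullet,F)$ underlie $\iota_*\mathbf D_X(\Q_X^H)$. The plan is to apply \lemmaref{lem-SmoothFactBC} to $(\cM^\bullet,F)$, obtaining
\[ \gamma_{b,Z,p}^{\cM^\bullet}\colon {\rm Gr}^F_p{\rm DR}_{Z_b}(k_b^!\cM^\bullet)\to Lk_b^!\,{\rm Gr}^F_p{\rm DR}_{Z/B}(\cM^\bullet), \]
and then to apply the Grothendieck duality functor $\mathbb D$, which reverses the arrow. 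It remains to identify the source and target of the dualized map.

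\emph{Source.} By base change for mixed Hodge modules, $k_b^!\iota_*\simeq \iota_{b,*}i_b^!$. Together with $i_b^!\mathbf D_X\simeq \mathbf D_{X_b} i_b^*$ and $i_b^*\Q_X^H=\Q_{X_b}^H$, this gives $k_b^!\iota_*\mathbf D_X(\Q_X^H)\simeq \iota_{b,*}\mathbf D_{X_b}(\Q_{X_b}^H)$, up to the shift and Tate twist built into $k_b^!$ (a shift by $\dim B$ and a corresponding shift of $F$). By the computation in \remarkref{rmk-LowestPiece}, i.e.\ compatibility of ${\rm Gr}^F{\rm DR}$ with duality and the identification of the lowest pieces, the source becomes $\mathbb D_{X_b}(\underline{\Omega}^p_{X_b}[-p])$, possibly after a shift and reindexing of $p$. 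Dualizing therefore produces $\underline{\Omega}^p_{X_b}[-p]$.

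\emph{Target.} On the smooth ambient space, $\mathbb D_{Z_b}\circ Lk_b^!\simeq Lk_b^*\circ\mathbb D_Z$. The object ${\rm Gr}^F_p{\rm DR}_{Z/B}(\cM^\bullet)$ is supported on $X$ by \eqref{eq-SuppX}, and $\mathbb D_Z\iota_*=\iota_*\mathbb D_X$. Hence the dual of the target is $Lk_b^*\iota_*\bigl(\underline{\Omega}^p_{X/B}[-p]\bigr)$. Flatness enters here. Locally, $Z_b$ is cut out by $\pi^*$ of a regular system of parameters at $b$, and by flatness of $f$ these pull back to a regular sequence on $X$. So $X$ and $Z_b$ are Tor-independent over $Z$, and
\[ Lk_b^*\iota_*\simeq \iota_{b,*}Li_b^*. \]
Putting this together, $\mathbb D(\gamma)$ becomes a morphism $\iota_{b,*}Li_b^*\underline{\Omega}^p_{X/B}[-p]\to \iota_{b,*}\underline{\Omega}^p_{X_b}[-p]$. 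Since $\iota_{b,*}$ is fully faithful on $D^b_{\rm coh}$, shifting by $p$ yields the desired map $Li_b^*\underline{\Omega}^p_{X/B}\to\underline{\Omega}^p_{X_b}$.

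The remaining checks are independence of the factorization and compatibility with shrinking $U$. For independence, we use the diagonal embedding into $B\times Y_1\times Y_2$ as in \lemmaref{lem-independent}, together with the compatibility of the $\chi$ maps with composition and with pushforward along $q_a$. Shrinking $U$ commutes with everything by the open-restriction lemma.

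The step I expect to be the main obstacle is the bookkeeping of shifts, Tate twists, and filtration shifts in $k_b^!$ and $\mathbf D$. The goal is to confirm that the index $p$ on the fiber side matches $p$ on the relative side, rather than $p$ shifted by $\dim B$. I would verify this first in the smooth case of \exampleref{eg-SmoothMaps}, where the base-change map must be the classical isomorphism $i_b^*\Omega^p_{X/B}\cong\Omega^p_{X_b}$, and use it to normalize the conventions.
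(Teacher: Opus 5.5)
Your overall route is the paper's. You apply \lemmaref{lem-SmoothFactBC} to $(\cM^\bullet,F)$ underlying $\iota_*\mathbf D(\Q_X^H)$, identify the source through $k_b^!\iota_*\simeq\iota_{b,*}i_b^!$, use flatness to make the fiber square Tor-independent, and dualize. Dualizing on $Z_b$ and then using $Lk_b^*\iota_*\simeq\iota_{b,*}Li_b^*$ is an equivalent variant of the paper's $Lk_b^!\iota_*\simeq\iota_{b,*}Li_b^!$ followed by $\mathbb D_{X_b}\circ Li_b^!\simeq Li_b^*\circ\mathbb D_X$ on $X_b$.

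The bookkeeping you flag as the main obstacle is harmless. The isomorphism $i_b^!\mathbf D_X(\Q_X^H)\simeq\mathbf D_{X_b}(i_b^*\Q_X^H)=\mathbf D_{X_b}(\Q_{X_b}^H)$ holds in $D^b({\rm MHM}(X_b))$ with no shift or Tate twist; a $[-\dim B]$ appears only for ${\rm IC}$. In the paper's right-module conventions no filtration shift enters the base-change lemma either. So $p$ on the fiber matches $p$ on the relative side.

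The step that actually fails is your last one. The functor $\iota_{b,*}\colon D^b_{\rm coh}(\cO_{X_b})\to D^b_{\rm coh}(\cO_{Z_b})$ is not fully faithful. Already for $\{0\}\hookrightarrow\mathbb A^1$, $\operatorname{Hom}(\cO_0,\cO_0[1])$ is $\C$ on $\mathbb A^1$ but $0$ on the point. So a morphism $\iota_{b,*}Li_b^*\underline{\Omega}^p_{X/B}[-p]\to\iota_{b,*}\underline{\Omega}^p_{X_b}[-p]$ in $D(\cO_{Z_b})$ need not come from any morphism on $X_b$. As written, you have not produced the arrow on $X_b$ that the statement asks for.

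What is needed is to build $\gamma$ over $\cO_X$, not only in $D(\cO_{Z_b})$. This is possible for the same reason behind \eqref{eq-SuppX}. The ideal $I$ of $X$ kills ${\rm Gr}^F$, and $I\cdot(F_p\cM\cap V^\alpha\cM)\subseteq F_{p-1}\cM\cap V^\alpha\cM$. Hence, in each codimension-one step, every term of the ${\rm Gr}^F$ of the $V$-filtration roof defining $\chi$ (proof of \propositionref{prop-VContainBC}) is a complex of $\cO_X$-modules. Its left end is a module over the divisor $X\cap\{t=0\}$. Its right end is $R\cH om_{\cO_X}(\cO_{X\cap\{t=0\}},-)$, computed by multiplication by $f^*t$, which is a nonzerodivisor by flatness.

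Iterating, $\gamma$ becomes a morphism $i_{b,*}S\to i_{b,*}Li_b^!G'$ in $D(\cO_X)$, with $S={\rm Gr}^F_p{\rm DR}_{X_b}(\mathbf D(\Q_{X_b}^H))$ and $G'={\rm Gr}^F_p{\rm DR}_{X/B}(\mathbf D(\Q_X^H))$. Compose with the counit $i_{b,*}Li_b^!G'\to G'$ and use the adjunction $i_{b,*}\dashv Li_b^!$ for the closed embedding $i_b\colon X_b\hookrightarrow X$. This gives $S\to Li_b^!G'$ in $D(\cO_{X_b})$, which you then dualize on $X_b$. The paper relies on exactly this, without comment, when it calls the complexes ``supported on $X_b$'' and applies $\mathbb D_{X_b}$.
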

\begin{proof} Fix $b\in B$ and replace $B$ with $U$ and $X$ with $f^{-1}(U)$ to assume from the beginning that $f$ is smooth-factorizable. Let $X \xrightarrow[]{\iota} Z = B \times Y \xrightarrow[]{\pi} B$ be a smooth factorization of $f$. Consider the Cartesian diagram
\[ \begin{tikzcd} X_b \ar[r,"i_b"] \ar[d,"\iota_b"] & X \ar[d,"\iota"] \\ Z_b = \{b\} \times Y \ar[r,"k_b"] & Z \end{tikzcd}.\]

Let $(\cM^\bullet,F)$ be the bounded complex of filtered $\cD_{Z}$-modules underlying $\iota_*\mathbf D(\Q_X^H)$. Then \lemmaref{lem-SmoothFactBC} gives the canonical base-change morphism
\[ {\rm Gr}^F_p {\rm DR}_{Z_b}(k_b^!(\cM^\bullet)) \to Lk_b^! {\rm Gr}^F_p {\rm DR}_{Z/B}(\cM^\bullet)\]
of complexes supported on $X_b$. 

We have a canonical quasi-isomorphism 
\[ k_b^! \iota_* \mathbf D(\Q_X^H) \simeq \iota_{b,*} \mathbf D(\Q_{X_b}^H),\]
and so, since $X_b\hookrightarrow X$ is a regular embedding (by flatness of $f$ and smoothness of $B$), we can rewrite this morphism as
\[ \iota_{b,*} {\rm Gr}^F_p {\rm DR}_{X_b}(\mathbf D_{X_b}(\Q_{X_b}^H)) \to \iota_{b,*} Li_b^! {\rm Gr}^F_p {\rm DR}_{X/B}(\mathbf D_X(\Q_X^H)).\]
 
If we apply $\mathbb D_{X_b}(-)$ to this morphism, we get a morphism
\[ Li_b^* \mathbb D_X{\rm Gr}^F_p {\rm DR}_{X/B}(\mathbf D_X(\Q_X^H)) \to \mathbb D_{X_b} {\rm Gr}^F_p {\rm DR}(\mathbf D(\Q_{X_b}^H)) \simeq {\rm Gr}^F_{-p} {\rm DR}_{X_b}(\Q_{X_b}^H),\]
where the isomorphism on the right holds because the fiber $X_b$ is a closed subvariety of the smooth variety $Y$. Up to shift, this gives the base-change morphism in the proposition statement.

By comparing two embeddings to the induced diagonal embedding over $B$, one can check that this is independent of the choice of smooth factorization.
\end{proof}

\subsection{Quasi-isomorphism criterion}
We give a $\cD$-module theoretic criterion for the base-change morphism to be a quasi-isomorphism in some range.

In the codimension $1$ case, the inverse image $i^!(-)$ for mixed Hodge modules can be defined by the Kashiwara--Malgrange $V$-filtration \cite{SaitoMHM}*{Cor. 2.24}. For details on this filtration and its behavior for filtered $\cD$-modules underlying Hodge modules, see \cite{SaitoMHP}*{Sec. 3}.

One can make the base-change morphism for filtered $\cD$-modules precise via the $V$-filtration. By the compatibility with composition, using local coordinates, we can always reduce to this case for computations. 

We have the following inductive criterion for the base-change map to be a quasi-isomorphism for some range of $p$:
\begin{prop} \label{prop-VContainBC} Let $i \colon Y \hookrightarrow Z$ be a closed embedding of smooth varieties and let $(\cM^\bullet,F)$ be a complex of filtered right $\cD_Z$-modules underlying $M^\bullet \in D^b({\rm MHM}(Z))$. 

Fix an integer $m\in \Z$. Let $Z = \bigcup_{\alpha \in \Lambda} U_\alpha$ be an open cover such that, for every $\alpha\in \Lambda$ with $Y \cap U_\alpha \neq \emptyset$, there are $t_1,\dots, t_r \in \cO_{U_\alpha}(U_\alpha)$ forming a partial set of coordinate functions defining $Y$ inside $U_\alpha$, so that if we set
\[ Y \cap U_\alpha = H_r \subseteq H_{r-1} = V(t_1,\dots, t_{r-1}) \subseteq \dots \subseteq H_1 = V(t_1) \subseteq H_0 = U_\alpha\]
with closed embeddings $\kappa_j \colon H_j \hookrightarrow H_{j-1}$, then we have containment for all $\ell \in \Z$ and all $j\in \{1,\dots, r\}$:
\[ F_m \cH^\ell \kappa_{j-1}^! \ldots \kappa_1^! \cM^\bullet \subseteq V^1_{t_j} \cH^\ell\kappa_{j-1}^! \ldots \kappa_1^! \cM^\bullet.\]

Then the base-change morphisms
\[\eta_{Y,Z,p}^{\cM^\bullet} \text{ and } \chi_{Y,Z,p}^{\cM^\bullet}\]
are quasi-isomorphisms for all $p \leq m$.
\end{prop}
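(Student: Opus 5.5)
Because both $\eta$ and $\chi$ are compatible with composition, I will reduce to the case of a single smooth hypersurface and then induct on the codimension $r$. The statement is local on $Z$: being a quasi-isomorphism can be checked on each $U_\alpha$, and the morphisms $\eta,\chi$ commute with restriction to opens. So I fix $\alpha$ and work with the flag $H_r\subseteq\dots\subseteq H_0$. The composition formula writes
\[
\eta_{Y,Z,p}^{\cM^\bullet}=\eta_{H_r,H_{r-1},p}\circ\cdots\circ\eta_{H_1,H_0,p},
\]
where each factor is derived-restricted along the later embeddings and $Li^!$ preserves quasi-isomorphisms. It therefore suffices to show, for each $j$, that $\eta_{H_j,H_{j-1},p}^{\cN^\bullet}$ is a quasi-isomorphism for $p\le m$, where $\cN^\bullet=\kappa_{j-1}^!\cdots\kappa_1^!\cM^\bullet$. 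Once the $\eta_p$ are quasi-isomorphisms for all $p\le m$, the morphism of triangles (five lemma) shows the same for $\chi_p$ when $p\le m$.

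Now take $t=t_j$ defining a smooth hypersurface $H\subseteq W$ and a complex $\cN^\bullet$ with $F_m\cH^\ell\cN^\bullet\subseteq V^1_t\cH^\ell\cN^\bullet$ for all $\ell$. Following Saito, for a single mixed Hodge module $\cN$ the object $i^!\cN$ is represented by the two-term complex
\[
{\rm Gr}_V^1\cN\xrightarrow{\;t\;}{\rm Gr}_V^0\cN
\]
(up to right-module index conventions), with the induced filtration $F$. The sheaf-theoretic $Li^!F_p\cN$ is represented by $[F_p\cN\xrightarrow{t}F_p\cN]$, shifted by one. The map $\eta_p$ is induced by the projections $V^1\to{\rm Gr}_V^1$ and $V^0\to{\rm Gr}_V^0$, and I will make this explicit by comparing both sides with $[F_pV^1\cN\xrightarrow{t}F_pV^0\cN]$.

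For $p\le m$ the hypothesis gives $F_p\cN\subseteq V^1\cN$, so $F_pV^1=F_p$. I then need two facts. First, $t\colon F_pV^{\alpha}\to F_pV^{\alpha+1}$ is an isomorphism for $\alpha>0$ (Saito's strict specializability); taking $\alpha\ge 1$, this makes the complex $[F_pV^{>1}\xrightarrow{t}F_pV^{>0}\cap\cdots]$ acyclic. Second, the kernel of $[F_p\cN\xrightarrow{t}F_p\cN]\to[F_p{\rm Gr}_V^1\to F_p{\rm Gr}_V^0]$ is $[F_pV^{>1}\xrightarrow{t}F_pV^{>0}]$ when $F_p\subseteq V^1$. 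Together these show that $\eta_p$ is a quasi-isomorphism, with a degree check using that $t$ is injective on $F_p\cN$ ($\cN$ has no $t$-torsion sections in $V^{>0}$). I then pass from modules to complexes using the cohomology-sheafwise hypothesis and strictness: the functors $i^!$ and $F_p$ are compatible with the cohomology of complexes underlying $D^b({\rm MHM})$, so a spectral sequence, or induction on amplitude via truncation triangles, reduces to single modules.

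The main obstacle is making the identification of the abstract counit $\eta$ with this explicit $V$-filtration map precise, including the right-module indexing of $V$. A second delicate point is that the hypothesis at step $j$ concerns $\cH^\ell$ of the already-restricted complex, so the truncation argument needs strictness of $\kappa^!$ on the filtered level. I would handle this by invoking Saito's result that $F$ and $V$ are compatible filtrations on mixed Hodge modules, which gives the strictness needed.
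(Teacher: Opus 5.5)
Your proposal is essentially the paper's proof. You localize, reduce via the composition formula and amplitude induction to a single module along one hypersurface, and observe that $F_p\subseteq V^1\subseteq V^0$ makes $[F_p\cN\xrightarrow{t}F_p\cN]\to[F_p{\rm Gr}_V^0\cN\xrightarrow{t}F_p{\rm Gr}_V^1\cN]$ a degreewise surjection whose kernel $[F_pV^{>0}\cN\xrightarrow{t}F_pV^{>1}\cN]$ is acyclic by Saito's isomorphism; the paper phrases this as the right leg of its roof becoming an equality. Your complexes are, however, written with $t$ going the wrong way ($t$ raises the $V$-index, so the complexes should run ${\rm Gr}_V^0\to{\rm Gr}_V^1$ and $V^{>0}\to V^{>1}$), and as literally written $[F_pV^{>1}\xrightarrow{t}F_pV^{>0}]$ has cokernel $F_pV^{>0}/F_pV^{>2}$, so it is not acyclic in general — a bookkeeping slip rather than a gap in the argument.
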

\begin{proof} First of all, since the base change morphisms are globally defined, we can replace $Z$ with $U_\alpha$ and $Y$ with $U_\alpha \cap Y$ to assume that we have a partial system of local coordinates $t_1,\dots, t_r\in \cO_Z(Z)$ defining $Y$. Moreover, by the compatibility with composition, it suffices to assume $r=1$, and we write $t= t_1$ in this case.

Thus, we have reduced to checking: if $(\cM^\bullet,F)$ satisfies 
\[ F_m \cH^\ell \cM^\bullet \subseteq V^1_t \cH^\ell \cM^\bullet\]
for all $\ell \in \Z$, then the base-change morphisms are quasi-isomorphisms for all $p\leq m$. Note that if $\eta_{Y,Z,p}^{\cM^\bullet}$ and $\eta_{Y,Z,p-1}^{\cM^\bullet}$ are quasi-isomorphisms, then so is $\chi_{Y,Z,p}^{\cM^\bullet}$, and so it suffices to prove the claim for the $\eta$ morphisms.

By induction on the cohomological amplitude of the complex $(\cM^\bullet,F)$, we can reduce via naturality of the maps involved to the case where $(\cM^\bullet,F) = (\cM,F)$ is a single filtered $\cD_Z$-module underlying a mixed Hodge module on $Z$, such that
\[ F_m \cM \subseteq V^1_t \cM.\]

Now the description of the base-change map in terms of the $V$-filtration comes in. By definition \cite{SaitoMHM}*{Cor. 2.24}, $F_p i^!(\cM)$ is given by the morphism
\[ F_p {\rm Gr}_V^0(\cM) = \frac{F_p \cM \cap V^0 \cM}{F_p \cM \cap V^{>0} \cM} \xrightarrow[]{t} \frac{F_p \cM \cap V^1 \cM}{F_p \cM\cap V^{>1}\cM} = F_p {\rm Gr}_V^1(\cM)\]
placed in degrees $0,1$. But for filtered $\cD_Z$-modules underlying mixed Hodge modules, we have the isomorphism \cite{SaitoMHP}*{(3.2.1.2)} 
\[ t\colon F_p\cM \cap V^{>0} \cM \cong F_p \cM \cap V^{>1} \cM,\]
and thus, in the following roof:
\[ [F_p {\rm Gr}_V^0(\cM) \xrightarrow[]{t} F_p {\rm Gr}_V^1(\cM)] \leftarrow [F_pV^0\cM \xrightarrow[]{t} F_pV^1\cM ] \to [F_p \cM \xrightarrow[]{t} F_p \cM ] \]
the left morphism is a quasi-isomorphism. This gives the morphism $\eta_{Y,Z,p}^{\cM}$.

Finally, if $F_m \cM \subseteq V^1_t \cM$, then $F_p \cM\subseteq V^1_t \cM \subseteq V^0_t \cM$ for all $p \leq m$, too. Thus, the right morphism in the roof is an equality, and so we conclude the desired quasi-isomorphism.
\end{proof}

\subsection{Generic Base Change}
We recall the non-characteristic property for filtered $\cD$-modules. The point will be that, over an open subset in the base, the fiber inclusions are non-characteristic for any finite collection of filtered $\cD$-modules underlying mixed Hodge modules.

\begin{defi} Let $Z$ be a smooth complex variety and let $(\cM,F)$ be a filtered $\cD_Z$-module. If $Y$ is a smooth closed subvariety with closed embedding $i\colon Y \to Z$, we say $i$ (or $Y$) is \emph{non-characteristic} with respect to $(\cM,F)$ if
\begin{enumerate} \item \label{itm-OZFlat} For any $p\in \Z$, we have
\[ {\rm Gr}^F_p \cM \otimes_{\cO_Z}^L \cO_Y \simeq {\rm Gr}^F_p \cM \otimes_{\cO_Z} \cO_Y.\]

\item \label{itm-nonCharD} The map $i$ is non-characteristic with respect to the underlying $\cD_Z$-module $\cM$, as defined in \cite{HTT}*{Def. 2.4.2}.
\end{enumerate}
\end{defi}

The second condition holds if there is a Whitney stratification $Z = \sqcup_{\alpha \in I} S_\alpha$ adapted to the $\cD_Z$-module $\cM$, such that $Y$ intersects each $S_\alpha$ transversally.

The non-characteristic property controls the restriction of the filtered $\cD$-module to $Y$, as in the following corollary:
\begin{cor} \label{cor-BCNonChar} Let $i \colon Y \hookrightarrow Z$ be a closed embedding of smooth varieties and let $(\cM^\bullet,F)$ be a complex of filtered right $\cD_Z$-modules underlying $M^\bullet \in D^b({\rm MHM}(Z))$. 

If $i$ is non-characteristic for $\cH^\ell(\cM^\bullet,F)$ for all $\ell \in \Z$, then the base-change morphisms
\[\eta_{Y,Z,p}^{\cM^\bullet} \text{ and } \chi_{Y,Z,p}^{\cM^\bullet}\]
are quasi-isomorphisms for all $p \in \Z$.
\end{cor}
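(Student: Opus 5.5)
The plan is to deduce the corollary from \propositionref{prop-VContainBC}: show that non-characteristic restriction forces the $V$-filtration containment for every $m$. Since the base-change morphisms are globally defined and compatible with composition, the question is local on $Z$. Near a point of $Y$ we choose local coordinates $t_1,\dots,t_r$ cutting out $Y$, with the chain $H_r\subseteq\dots\subseteq H_0$ as in \propositionref{prop-VContainBC}. By induction on the cohomological amplitude and naturality (as in the proof of that proposition), we may also assume $(\cM,F)$ is a single filtered module underlying a mixed Hodge module for which $Y$ is non-characteristic.

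\textbf{Step 1: non-characteristic restriction passes down the chain.} After shrinking, each intermediate $H_j$ is non-characteristic for $\cM$. The characteristic variety condition $T^*_YZ\cap {\rm Ch}(\cM)\subseteq T^*_ZZ$ propagates to the hypersurfaces $H_j\supseteq Y$ near $Y$: transversality to a Whitney stratification is an open condition, so it holds for the $H_j$ after shrinking $U_\alpha$. Condition \eqref{itm-OZFlat} means the $t_j$ form a regular sequence on each ${\rm Gr}^F_p\cM$. Restricted to the successive quotients, this regular-sequence property persists, which gives the flatness condition at every stage.

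\textbf{Step 2: the key input.} For a smooth hypersurface $H=V(t)$ that is non-characteristic for a filtered module underlying a mixed Hodge module, $\kappa^!\cM$ is concentrated in a single degree. The underlying filtered module is $(\cM/t\cM, F)$ with the induced filtration, up to the standard shift. Moreover $V^\bullet_t$ is the $t$-adic filtration, $V^\alpha_t\cM=t^{\lceil\alpha\rceil-1}\cM$, so that $V^1_t\cM=\cM$. This is Saito's non-characteristic restriction result \cite{SaitoMHP}*{Sec. 3.5}, where ${\rm Gr}^F$-flatness yields the strictness needed. Granting it, the containment $F_m\cM\subseteq V^1_t\cM=\cM$ holds trivially for every $m$, and likewise at each stage of the chain using Step 1. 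Applying \propositionref{prop-VContainBC} with arbitrary $m$ then gives the quasi-isomorphisms for all $p$.

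\textbf{Main obstacle.} The hard part is Step 2: the identification $V^1_t\cM=\cM$ in the non-characteristic case, with compatibility with $F$. If citing Saito is not accepted directly, I would first prove it by hand. The non-characteristic condition makes $t$ injective on $\cM$ and makes $\cM$ coherent over $\cD_{Z/\C_t}$, so the $t$-adic filtration is a good $V$-filtration. The eigenvalues of $t\de_t$ on ${\rm Gr}^{\alpha}$ all lie at $\alpha=1$, and uniqueness of the Kashiwara--Malgrange filtration forces equality. Alternatively, once Step 2 is in hand one can bypass the proposition altogether and check directly that the roof in its proof consists of quasi-isomorphisms for every $p$.
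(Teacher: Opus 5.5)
Your proposal is correct and follows essentially the same route as the paper: reduce to a coordinate flag $Y=H_r\subseteq\dots\subseteq H_0$, propagate the non-characteristic property down the flag (your Step 1, for which the paper simply cites \cite{SaitoMHP}*{Lem. 3.5.4(1)}), use \cite{SaitoMHP}*{Lem. 3.5.6} to see that $V^1_{t_j}$ is the whole module so that $F_m\subseteq V^1_{t_j}$ holds for every $m$, and conclude by \propositionref{prop-VContainBC}. One minor point: in Step 1 the openness of the non-characteristic condition near $Y$ should be argued directly from the characteristic variety, since Whitney transversality is only a sufficient condition; this does not affect the argument.
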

\begin{proof} Locally, we can assume $Y$ is defined inside $Z$ by some partial system of coordinates $t_1,\dots, t_r$ as in \propositionref{prop-VContainBC}. Factoring the inclusion $i \colon Y \hookrightarrow Z$ as
\[ Y = H_r \subseteq H_{r-1} \subseteq \dots \subseteq H_0= Z\]
note that \cite{SaitoMHP}*{Lem. 3.5.4(1)} implies that the closed embedding $\kappa_{j} \colon H_{j} \to H_{j-1}$ is non-characteristic with respect to $\cH^\ell \kappa_{j-1}^!\ldots \kappa_1^! (\cM^\bullet,F)$ for every $\ell \in \Z$.

Then, \cite{SaitoMHP}*{Lem. 3.5.6} shows that
\[ V^1_{t_j} \cH^\ell \kappa_{j-1}^!\ldots \kappa_1^! (\cM^\bullet) = \cH^\ell \kappa_{j-1}^!\ldots \kappa_1^! (\cM^\bullet),\]
and so in particular, 
\[ F_m \cH^\ell \kappa_{j-1}^!\ldots \kappa_1^! (\cM^\bullet) \subseteq V^1_{t_j} \cH^\ell \kappa_{j-1}^!\ldots \kappa_1^! (\cM^\bullet)\]
for all $\ell, m \in \Z$. Thus, the claim follows from \propositionref{prop-VContainBC}.
\end{proof}

We can now prove \theoremref{thm-generalBC}.

\begin{proof}[Proof of \theoremref{thm-generalBC}] By replacing $B$ with an appropriate open subset $U$ and $X$ with $f^{-1}(U)$, we can assume from the beginning that $f$ is flat (by generic flatness) and smooth-factorizable. 

Let $f \colon X \xrightarrow[]{\iota} B \times Y \xrightarrow[]{\pi} B$ be a smooth factorization with $Y$ smooth. Let $(\cM^\bullet,F)$ be the filtered $\cD_{B\times Y}$-module complex underlying $\iota_*\mathbf D(\Q_X^H)$. Then \cite{CDOHigher}*{Lem. 8.2} shows that there is a dense open subset $U$ of the base $B$ so that all fibers of $B\times Y\to B$ over $b\in U$ are non-characteristic with respect to the finitely many nonzero cohomology filtered $\cD$-modules $(\cH^\ell \cM^\bullet,F)$. We conclude the claim by \corollaryref{cor-BCNonChar} above.

If $(\cL,F)$ is the filtered $\cD_{B\times Y}$-module underlying $\iota_* \mathbf D({\rm IC}_X^H)$, then we can choose a dense open $U$ such that, for all $b\in U$, the fiber $\{b\} \times Y$ is non-characteristic with respect to $(\cL,F)$. 

For such a fiber, we have
\[ k_b^* \iota_* {\rm IC}_X^H \cong \iota_{b,*}{\rm IC}_{X_b}^H[\dim B]\]
(see \cite{KebekusSchnell}*{Thm. 4.16}). Dualizing, we get the relation
\[ k_b^! \iota_*\mathbf D_X({\rm IC}_X^H) \cong \iota_{b,*}\mathbf D_{X_b}({\rm IC}_{X_b}^H)[-\dim B],\]
and so the claim follows by \corollaryref{cor-BCNonChar}.
\end{proof}

\subsection{Mildly Singular Fibers}
In forthcoming joint work with Chen and Olano, we introduce three singularity notions which control Hodge theory in flat projective families. We briefly review the definitions. For more properties and examples, see \cite{CDOHigher}. For simplicity, and since it suffices for the relevant result, we assume $\iota \colon X\subseteq Y$ is a closed embedding into a smooth variety $Y$.

\begin{defi} Let $I \subseteq \cO_Y$ be the coherent radical ideal sheaf defining the reduced variety $X$ inside $Y$, and let $\cH^q_X(\omega_Y) \coloneqq \varinjlim_p \cE xt^q_{\cO_Y}(\cO_Y/I^{p+1},\omega_Y)$ denote the local cohomology modules of $\omega_Y$ along $X$. As $Y$ is smooth, we could use the equivalent description in terms of symbolic powers:
\[ \cH^q_X(\omega_Y) = \varinjlim_p \cE xt^q_{\cO_Y}(\cO_Y/I^{(p+1)},\omega_Y).\]

By Saito's theory, these filtered $\cD_Y$-modules underlie mixed Hodge modules on $Y$ (see, for example, \cite{MP3} for an in-depth discussion of this structure, or Remark \ref{rmk-Translate} below), hence admit a Hodge filtration $F_\bullet \cH^q_X(\omega_Y)$ and a weight filtration $W_\bullet \cH^q_X(\omega_Y)$. Recall that 
\begin{equation} \label{eq-WeightIC} W_{\dim Y + c} \cH^c_X(\omega_Y) \cong {\rm IC}_X^H(-c),\end{equation}
where $c \coloneqq \dim Y - \dim X$.

Let $m \in \Z_{\geq 0}$. The variety $X$ has \emph{symbolic $m$-Du Bois singularities} if, for all $q \in \Z$ and $0\leq p \leq m$, the morphism
\[ \cE xt^q_{\cO_Y}(\cO_Y/I^{(p+1)},\omega_Y) \to \cH^q_X(\omega_Y)\]
is injective with image $F_{p-\dim Y} \cH^q_X(\omega_Y)$.

The variety $X$ has \emph{strong $m$-Du Bois singularities} if, for all $q \in \Z$ and $0\leq p \leq m$, the morphism
\[ \cE xt^q_{\cO_Y}(\cO_Y/I^{p+1},\omega_Y) \to \cH^q_X(\omega_Y)\]
is injective with image $F_{p-\dim Y} \cH^q_X(\omega_Y)$.

It is shown in \cite{CDOHigher} that if $X$ is strongly $m$-Du Bois, then $I^{p+1} = I^{(p+1)}$ for all $0 \leq p\leq m$, and so it is also symbolically $m$-Du Bois.

The variety $X$ has \emph{weak $m$-rational singularities} if, for all $0 \leq p \leq m$, the image of the (injective) morphism
\[ \cE xt^c_{\cO_Y}(\cO_Y/I^{p+1},\omega_Y) \to \cH^c_X(\omega_Y)\]
is equal to $F_{p-\dim Y} W_{\dim Y +c} \cH^c_X(\omega_Y)$. Note that this definition only concerns the index $c = \dim Y - \dim X$.
\end{defi}

\begin{rmk} If $X$ has local complete intersection (LCI) singularities, then strong $m$-Du Bois is equivalent to $m$-Du Bois as defined in \cites{FL2,MPDB}.

In general, for $m=0$, strong and symbolic $0$-Du Bois are equivalent to usual Du Bois singularities, and weak $0$-rational is equivalent to weak rationality, if $X$ is assumed normal.
\end{rmk}

\begin{rmk} \label{rmk-Translate} To relate these constructions to what we have studied in this paper, note that we have the quasi-isomorphism
\[ \iota_* \iota^!( \Q_Y^H[\dim Y]) \simeq \iota_* \mathbf D_X(\Q_X^H)[-\dim Y](-\dim Y), \]
and similarly, we have
\[ \cH^q \iota_* \iota^! \Q_Y^H[\dim Y] \cong \cH^q_X(\omega_Y).\]

Thus, up to Tate twist (which just shifts Hodge and weight filtrations), the local cohomology modules in the definition of the singularity classes above are the same as the cohomology modules of $\iota_*\mathbf D_X(\Q_X^H)$, used in the definition of the relative Du Bois complex.
\end{rmk}

We will make use of the following result from \cite{CDOHigher}*{Prop. 5.6}.

\begin{prop} \label{prop-HigherSingVFilt} Let $f\colon X \to B$ be a flat projective morphism with factorization $X \xrightarrow[]{\iota} B \times \P^n \to B$. For $b_0 \in B$ fixed, after shrinking $B$ if necessary we consider a system of coordinates $t_1,\dots ,t_r$ defining $b_0$ inside $B$, with associated flag
\[ \{b_0\} \times \P^n = H_r \subseteq H_{r-1} = V(t_1,\dots, t_{r-1}) \subseteq \dots \subseteq H_0 = B \times \P^n\]
and closed embeddings $\kappa_j \colon H_j \to H_{j-1}$.

Let $(\cM^\bullet,F)$ denote the bounded complex of filtered right $\cD_{B \times \P^n}$-modules underlying $\iota_*\mathbf D(\Q_X^H)$.

If the fiber $X_{b_0} = f^{-1}(b_0)$ is symbolically $m$-Du Bois, then after possibly shrinking $B$ near $b_0$ and $X$ near $X_{b_0}$, we have containment for all $\ell \in \Z$ 
\[ F_m \cH^\ell \cM^\bullet \subseteq V^1_{t_1} \cH^\ell \cM^\bullet,\]
and for all $j\in \{2,\dots, r\}$, containment
\[ F_m \cH^\ell \kappa_{j-1}^! \ldots \kappa_1^! \cM^\bullet \subseteq V^1_{t_j} \cH^\ell\kappa_{j-1}^! \ldots \kappa_1^! \cM^\bullet.\]
\end{prop}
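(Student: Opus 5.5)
The plan is to reduce the $V$-filtration containment to a statement about the naive restriction of the Hodge pieces, and then to check that statement using the symbolic $m$-Du Bois hypothesis on $X_{b_0}$ together with flatness of $f$. I treat the first step $t_1$ in detail; the steps $t_j$ for $j\geq 2$ go the same way.

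\textbf{Reduction to a restriction statement.} By Remark \ref{rmk-Translate}, up to a shift and a Tate twist, $\cH^\ell \cM^\bullet$ is the local cohomology module $\cH^q_X(\omega_{B\times\P^n})$ with its Hodge filtration. For a filtered module $(\cN,F)$ underlying a mixed Hodge module, Saito's description of $i^!$ recalled in the proof of \propositionref{prop-VContainBC} gives the following dichotomy:
\begin{itemize}
\item $F_p\cN\subseteq V^1_t\cN$ holds exactly when the roof map $[F_pV^0\cN\xrightarrow{t}F_pV^1\cN]\to[F_p\cN\xrightarrow{t}F_p\cN]$ is an isomorphism in degree $p$;
\item equivalently, $F_p\cN$ has no component in ${\rm Gr}_V^\alpha$ with $\alpha<1$.
\end{itemize}
So it suffices to show, for all $p\leq m$ and near $X_{b_0}$, that the naive restriction $L\kappa_1^!F_p\cM^\bullet$ agrees with $F_p\kappa_1^!\cM^\bullet$. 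I will prove this by induction on $p$, using the exact triangles of ${\rm Gr}^F$ and the counit maps $\eta_p$. The same argument then applies verbatim at each step of the flag, replacing $\cM^\bullet$ by $\kappa_{j-1}^!\cdots\kappa_1^!\cM^\bullet$.

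\textbf{Identifying both sides with $\cE xt$ sheaves.} Let $I$ be the ideal of $X$ in $B\times\P^n$ and $I_b$ that of $X_{b_0}$ in $\P^n$. The key input is the symbolic $m$-Du Bois condition on the fiber, which says that
\[
F_{p-n}\cH^q_{X_{b_0}}(\omega_{\P^n}) = \cE xt^q(\cO/I_b^{(p+1)},\omega_{\P^n}) \quad\text{for } p\leq m.
\]
For the total space I always have the inclusion $\cE xt^q(\cO/I^{(p+1)},\omega)\to F\cH^q_X(\omega)$, and I need to compare it with the fiber. The steps are:
\begin{enumerate}
\item Since $f$ is flat and the hypothesis holds along the fiber, show that $\cO/I^{(p+1)}$ is flat over $B$ near $b_0$ and restricts to $\cO/I_b^{(p+1)}$.
\item This gives base change for the $\cE xt$ sheaves in a neighborhood; by projectivity of $f$, any such open neighborhood of $X_{b_0}$ contains the preimage of an open neighborhood of $b_0$.
\item Combine step 1 with Grothendieck duality $Lk^!R\cH om(-,\omega)\simeq R\cH om(Lk^*-,\omega)$ to identify $L\kappa_1^!$ of the $\cE xt$-pieces with the $\cE xt$-pieces on the hypersurface.
\item On the other hand, $\kappa_1^!$ of local cohomology is local cohomology on the hypersurface.
\end{enumerate}
Comparing the two sides, the symbolic $m$-Du Bois condition on the fiber forces $F_p$ on the total space, for $p\leq m$, to equal the $\cE xt$-piece near $X_{b_0}$, and the restriction to be compatible. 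This yields the containment $F_m\subseteq V^1_{t_1}$.

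\textbf{Main obstacle.} The hardest step is step 1: showing that symbolic powers commute with restriction to the fiber, i.e.\ that $I^{(p+1)}\otimes\cO_{H_1}$ equals the corresponding symbolic power there, given only the fiber condition. I would first try to deduce this from the injectivity part of the symbolic $m$-Du Bois condition on $X_{b_0}$, using semicontinuity and Nakayama's lemma in a neighborhood of the projective fiber, by an induction on $p$ through the sequences
\[
0\to I^{(p)}/I^{(p+1)}\to \cO/I^{(p+1)}\to \cO/I^{(p)}\to 0.
\]
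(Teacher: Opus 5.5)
Your proposal has a genuine gap. It never passes from the hypothesis on the single fiber $X_{b_0}$ to information about the Hodge filtration on $X$ and on the intermediate slices $X_j=X\cap H_j$, and that is where the content of the statement lies.

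The paper handles this with two inputs from \cite{CDOHigher}:
\begin{enumerate}
\item Inversion of Adjunction for symbolic $m$-Du Bois singularities \cite{CDOHigher}*{Thm.~B}. Applied successively along $X_r\subseteq X_{r-1}\subseteq\cdots\subseteq X_0=X$ (each a Cartier divisor in the next, by flatness), it shows that after shrinking, every $X_j$ is symbolically $m$-Du Bois.
\item \cite{CDOHigher}*{Prop.~5.6}, which gives $F_m\subseteq V^1_{t_j}$ for a Cartier pair $X_j\subseteq X_{j-1}$ in which both members are symbolically $m$-Du Bois.
\end{enumerate}
Your sentence ``the symbolic $m$-Du Bois condition on the fiber forces $F_p$ on the total space, for $p\le m$, to equal the $\cE xt$-piece near $X_{b_0}$'' is exactly the assertion that $X$ is symbolically $m$-Du Bois near $X_{b_0}$, i.e.\ the first theorem. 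The comparison you sketch is circular: to transport the fiber condition up to $X$, you must know how $F_p$ on the total space restricts to $H_1$. That is the isomorphism $\eta_p$ you are trying to prove. For the same reason, ``the same argument applies verbatim at each step of the flag'' presupposes the hypothesis on $X_1,\dots,X_{r-1}$, which you only have for $X_r=X_{b_0}$. Your step 1 (symbolic powers commuting with restriction), which you flag as the main obstacle, is only a plan. Even if established, it controls the $\cE xt$-side, not the Hodge side.

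There are two further problems. First, the claimed inclusion of $\cE xt^q(\cO/I^{(p+1)},\omega)$ into the Hodge piece on the total space fails in general. Take the cusp $X=\{y^2=x^3\}\subseteq\C^2$ and $p=0$. The image of $\cE xt^1(\cO_X,\omega_{\C^2})$ in $\cH^1_X(\omega_{\C^2})$ is generated by the class of $dx\wedge dy/(y^2-x^3)$. But $F_{-2}\cH^1_X(\omega_{\C^2})$ is only the maximal ideal times that class, because the log canonical threshold is $5/6<1$. So the $\cE xt$-description of $F_p$ on $X$ is itself the singularity condition, not something available a priori.

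Second, your dichotomy ``$F_p\cN\subseteq V^1_t\cN$ iff $\eta_p$ is a quasi-isomorphism'' is correct for a single module $\cN$, but your steps 3--4 operate on complexes. The functor $\kappa_1^!$ turns $R\Gamma_X$ into $R\Gamma_{X_1}$ only in the derived category, while $\kappa_1^!$ of an individual module $\cH^q_X(\omega)$ is a two-term complex. A quasi-isomorphism $\eta_p$ for $\cM^\bullet$ does not yield one for each $\cH^\ell\cM^\bullet$, since the cones for adjacent cohomology modules can cancel. You would also have to check that $\eta_p$ corresponds to the $\cE xt$ base-change map under your identifications.
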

\begin{proof} Let $X \cap H_j = X_j$. By assumption, $X_r$ is symbolically $m$-Du Bois. Then, by Inversion of Adjunction \cite{CDOHigher}*{Thm. B} for this property, we can replace $X_{r-1}$ with an open neighborhood of the Cartier divisor $X_r\subseteq X_{r-1}$, and then assume $X_{r-1}$ is symbolically $m$-Du Bois. Repeating inductively in this way, up to replacing $X$ by an open subset, we can assume that $X_j$ is symbolically $m$-Du Bois for all $j$.

But then for a Cartier divisor $X_j \subseteq X_{j-1}$ such that both spaces are symbolically $m$-Du Bois, we have the desired $V$-filtration containment shown in \cite{CDOHigher}*{Prop. 5.6}.
\end{proof}

As mentioned above, there is, in general, no canonical comparison map between the ${\rm IC}$-objects at the filtered $\cD$-module level. In \cite{CDOHigher}*{Sec. 9}, a staircase construction is used to give a comparison, but only under mild singularity hypotheses. We summarize this in the following proposition.

\begin{prop}[\cite{CDOHigher}*{Cor. 9.5}] \label{prop-WeakMRationalCompare} Let $i \colon X' \hookrightarrow X$ be a closed embedding between pure-dimensional varieties with a flag
\[ X' = X_r \xrightarrow[]{i_r} X_{r-1} \xrightarrow[]{i_{r-1}}\dots \xrightarrow[]{i_1} X_0 = X\]
such that $X_j$ is a Cartier divisor in $X_{j-1}$ for all $j \in \{1,\dots, r\}$. Assume moreover that $X'$ is weakly $m$-rational.

The staircase argument of \cite{CDOHigher} gives, for $p\leq m$, a zigzag of quasi-isomorphisms
\[ {\rm Gr}^F_p {\rm DR}_{X'}(i^! \mathbf D({\rm IC}_X^H))[r] \simeq {\rm Gr}^F_p {\rm DR}_{X'} \cH^1 i_r^! \cH^1i_{r-1}^! \dots \cH^1 i_1^! \mathbf D({\rm IC}_X^H) \simeq {\rm Gr}^F_p {\rm DR}_{X'} \mathbf D({\rm IC}_{X'}^H).\]
\end{prop}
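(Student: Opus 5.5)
This is a staircase argument: peel off one Cartier divisor at a time, so that each step involves only the codimension one restriction $\cH^1 i_j^!$ and a single $V$-filtration. Set $N_0 \coloneqq \mathbf D({\rm IC}_X^H)$ and $N_j \coloneqq \cH^1 i_j^! N_{j-1}$, working after a smooth embedding $X\subseteq Z$ with coordinates $t_1,\dots,t_r$ cutting out the flag locally, as in \propositionref{prop-HigherSingVFilt}. For a Cartier divisor, $i_j^!$ of a single mixed Hodge module is computed by Saito's cone $[{\rm Gr}_V^0 \xrightarrow{t_j} {\rm Gr}_V^1]$ and so has cohomology only in degrees $0$ and $1$. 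Here $\cH^0 i_j^!$ is the kernel of $t_j$ on ${\rm Gr}_V^0$, i.e.\ the part supported on $X_j$. Since ${\rm IC}$ (hence its dual) has no sub- or quotient objects supported on proper closed subsets, this kernel vanishes at the first step.

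The plan is then:
\begin{enumerate}[label=(\arabic*)]
\item \emph{First zigzag.} I induct on $j$ to show that ${\rm Gr}^F_p{\rm DR}$ of $\cH^0 i_j^! N_{j-1}$ vanishes for $p\le m$, and likewise for the lower-degree terms of $i_j^! N_{j-1}$. Truncation then gives ${\rm Gr}^F_p{\rm DR}(i_j^!N_{j-1}) \simeq {\rm Gr}^F_p{\rm DR}(N_j)[-1]$ in the range $p\le m$. Composing via the transitivity $i^! = i_r^!\circ\cdots\circ i_1^!$ produces the first zigzag with total shift $[r]$.
\item \emph{Second zigzag.} I compare $N_r$ with $\mathbf D({\rm IC}_{X'}^H)$. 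By duality, $N_j$ is dual to $\cH^{-1}i_j^*$ of the previous object. The weight relation \eqref{eq-WeightIC} identifies ${\rm IC}_{X_j}$ (up to twist) with the lowest weight piece $W$ of that restriction. So there are natural maps $N_j \to \mathbf D({\rm IC}_{X_j}^H)$ (quotient by the lower weights after dualizing), or the inclusion in the other direction. Chaining these maps along with the $i_j^!$ gives the zigzag.
\item \emph{Why the maps are quasi-isomorphisms on ${\rm Gr}^F_p{\rm DR}$ for $p\le m$.} By \remarkref{rmk-Translate}, weak $m$-rationality of $X'$ says that $F_{p}$ of the lowest weight part of $\cH^c_{X'}(\omega)$ equals the image of $\cE xt^c(\cO/I^{p+1},\omega)$, i.e.\ it equals $F_p$ of the whole top local cohomology, up to index shift. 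Hence ${\rm Gr}^F_p$ of the weight-graded cokernel vanishes for $p\le m$, and so does its ${\rm Gr}^F_p{\rm DR}$, using that $F_{p}$ of the module determines ${\rm Gr}^F_{\le p}{\rm DR}$.
\item \emph{Propagating up the flag.} The hypothesis is only on $X'$, so I need the analogous statement for each $X_j$ near $X'$. I would invoke inversion of adjunction for weak $m$-rationality from \cite{CDOHigher}, shrinking $X$ around $X'$, exactly as in the proof of \propositionref{prop-HigherSingVFilt}.
\end{enumerate}

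The main obstacle is step (1): the vanishing of the $\cH^0$ terms in the Hodge range at later stages $j\ge2$. There $N_{j-1}$ is no longer an ${\rm IC}$ module, so the support argument fails. My first attempt is to show the containment $F_m N_{j-1}\subseteq V^1_{t_j}N_{j-1}$, which follows from the weak $m$-rationality of $X_{j-1}$ in the spirit of \cite{CDOHigher}*{Prop.\ 5.6}. Then the argument of \propositionref{prop-VContainBC} shows that the $\cH^0$ part has $F_p = 0$ for $p\le m$, which is all that ${\rm Gr}^F_p{\rm DR}$ sees.
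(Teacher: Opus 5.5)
Your skeleton is the right one: peel off one Cartier divisor at a time, kill $\cH^0 i_j^!$ in the Hodge range, and use inversion of adjunction to make every $X_j$ weakly $m$-rational, as in Remark~\ref{rmk-Staircase}. Step (1) is fine once the $V^1$-containment is granted. The gap is in steps (2)--(3).

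Both \eqref{eq-WeightIC} and the definition of weak $m$-rationality are statements about the \emph{ambient} top local cohomology $L_k\coloneqq\cH^{c_k}_{X_k}(\omega_Y)$, with $c_k=\dim Y-\dim X_k$, i.e.\ about restricting the IC module of the smooth $Y$. Your $N_j$ is built from $\mathbf D({\rm IC}_X^H)$ on the possibly singular $X$ by iterated $\cH^1 i^!$. It is a different object, and neither fact applies to it directly.

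In fact the lowest weight part of $\cH^1 i_j^!\mathbf D({\rm IC}_{X_{j-1}}^H)$ is in general strictly larger than $\mathbf D({\rm IC}_{X_j}^H)$. For example, take $X$ the cone over a smooth plane cubic $E$ and $X_1$ a general plane section through the vertex. Then ${\rm Gr}^W_1\cH^{-1}i_1^*{\rm IC}_X^H\cong{\rm IC}_{X_1}^H\oplus H^1(E)_{\rm vertex}$. So the canonical map is the inclusion of the summand with strict support $X_j$, namely $\mathbf D({\rm IC}_{X_j}^H)\hookrightarrow\cH^1 i_j^!\mathbf D({\rm IC}_{X_{j-1}}^H)$, not a map out of $N_j$. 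Moreover, the vanishing you prove in step (3), of $F_{\le m}$ of $L_r/W_{\rm low}L_r$, concerns the cokernel of a different map from the ones you chain together.

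The missing bridge is where weak $m$-rationality of \emph{both} $X_{j-1}$ and $X_j$ enters.

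\begin{enumerate}
\item Let $I_k=W_{\rm low}L_k$; this is $\mathbf D({\rm IC}_{X_k}^H)$ up to a Tate twist independent of $k$. Weak $m$-rationality of $X_k$ gives $F_{\le m}(L_k/I_k)=0$ in the relevant indexing. This also needs the Musta\c{t}\u{a}--Popa inclusion of the Hodge filtration in the order filtration, which for the index $c_k$ is the image of $\cE xt^{c_k}(\cO_Y/I^{p+1},\omega_Y)$; your ``i.e.'' in step (3) silently uses it too.
\item Since $\cH^0 i_j^!$ and $\cH^1 i_j^!$ preserve the condition $F_m=0$ (by the $V$-filtration formula), the map $\cH^1 i_j^!I_{j-1}\to\cH^1 i_j^!L_{j-1}$ is an isomorphism on $F_{\le m}$.
\item Writing $\iota_j^!=i_j^!\iota_{j-1}^!$ and using that $i_j^!$ has amplitude $[0,1]$, you get
\[0\to\cH^1 i_j^!L_{j-1}\to L_j\to\cH^0 i_j^!\cH^{c_j}_{X_{j-1}}(\omega_Y)\to 0.\]
The last term is supported in dimension $<\dim X_j$, because $X_{j-1}$ is smooth at the generic points of the reduced Cartier divisor $X_j$. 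Hence $I_j\subseteq\cH^1 i_j^!L_{j-1}\subseteq L_j$.
\item Weak $m$-rationality of $X_j$ gives $F_{\le m}I_j=F_{\le m}L_j$, which makes the first inclusion an isomorphism on $F_{\le m}$.
\end{enumerate}

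The zigzag $I_j\hookrightarrow\cH^1 i_j^!L_{j-1}\leftarrow\cH^1 i_j^!I_{j-1}$ at each level, pushed through the remaining $\cH^1 i^!$'s, gives the second quasi-isomorphism. It also settles what you call the main obstacle. By induction, $N_{j-1}$ agrees with $\mathbf D({\rm IC}_{X_{j-1}}^H)$ up to kernels and cokernels with $F_m=0$, and $\cH^0 i_j^!\mathbf D({\rm IC}_{X_{j-1}}^H)=0$. Therefore $F_m\cH^0 i_j^!N_{j-1}=0$ without any separate $V$-filtration input.
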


\begin{rmk} \label{rmk-Staircase} In the proof of the staircase comparison, \cite{CDOHigher} uses the fact that, by Inversion of Adjunction, we can replace $X_{r-1},\dots, X_0$ by open neighborhoods to assume that all of those varieties are weakly $m$-rational.

Moreover, in the smoothly embedded setting $\iota \colon X \hookrightarrow Z$, an iterated $V$-filtration containment as in the statement of \propositionref{prop-HigherSingVFilt} holds, with $(\cM,F)$ underlying $\iota_*\mathbf D({\rm IC}_X^H)$.
\end{rmk}

\begin{proof}[Proof of \theoremref{thm-higherBC}] 
The deformation result of \cite{CDOHigher}*{Cor. 8.6} says that if $X_{b_0}$ is symbolically $m$-Du Bois (respectively, weakly $m$-rational), then there exists an open subset $b_0 \in U \subseteq B$ such that, for all $b\in U$, the fiber $X_b = f^{-1}(b)$ is symbolically $m$-Du Bois (respectively, weakly $m$-rational).

Up to further shrinking $U$ around $b_0$, by projectivity of the map $f$, we know that $f$ is smooth-factorizable through $U \times \P^N$ for some $N\in \Z_{\geq 1}$. We also replace $X$ by $f^{-1}(U)$.

Set $Z=U\times\mathbf P^N$, choose a closed embedding $\iota\colon X \hookrightarrow Z$, and, for $b\in U$, let
$k_b\colon Z_b = \{b\}\times\mathbf P^N\hookrightarrow Z$ and
$i_b\colon X_b\hookrightarrow f^{-1}(U)$ be the fiber inclusions.

For a symbolically $m$-Du Bois fiber, let $(\cM^\bullet,F)$ be the filtered $\cD_{Z}$-module underlying $\iota_*\mathbf D(\Q_X^H)$. Then the claim follows from \propositionref{prop-HigherSingVFilt}, which implies by \propositionref{prop-VContainBC} that the base-change morphism
\[{\rm Gr}^F_p {\rm DR}_{Z_b}(k_b^!(\cM^\bullet)) \to Lk_b^! {\rm Gr}^F_p {\rm DR}_{Z/B}(\cM^\bullet)\]
is a quasi-isomorphism for all $p\leq m$. Here we have a natural quasi-isomorphism
\[ i_b^! \mathbf D(\Q_X^H) \simeq \mathbf D(\Q_{X_b}^H),\]
so the base-change morphism can be written
\[{\rm Gr}^F_p {\rm DR}_{X_b}(\mathbf D(\Q_{X_b}^H)) \to Li_b^! {\rm Gr}^F_p {\rm DR}_{X/B}(\mathbf D(\Q_X^H)).\]

We get the base-change morphism in the theorem statement by applying Grothendieck duality.

For a weakly $m$-rational fiber, let $(\cM,F)$ be the filtered $\cD_{U\times \P^N}$-module underlying $\iota_* \mathbf D({\rm IC}_X^H)$. Then for all $p\leq m$, we have the staircase-induced quasi-isomorphism
\[ {\rm Gr}^F_p {\rm DR}_{X_b}(i_b^! \mathbf D({\rm IC}_X^H))[\dim B]  \simeq {\rm Gr}^F_p {\rm DR}_{X_b} \mathbf D({\rm IC}_{X_b}^H).\]

As mentioned in Remark \ref{rmk-Staircase}, the weak $m$-rationality condition implies containment of $F_m$ in the corresponding $V^1$ pieces for a flag as in the statement of \propositionref{prop-VContainBC}. Applying \propositionref{prop-VContainBC}, we see that the base-change morphism
\[ {\rm Gr}^F_p {\rm DR}_{X_b}(i_b^! \mathbf D({\rm IC}_X^H))[\dim B] \to Li_b^! {\rm Gr}^F_p {\rm DR}_{X/B}(\mathbf D({\rm IC}_X^H))[\dim B]\]
is a quasi-isomorphism for all $p\leq m$. Composing these quasi-isomorphisms, we get 
\[ {\rm Gr}^F_p {\rm DR}_{X_b} \mathbf D({\rm IC}_{X_b}^H) \simeq Li_b^! {\rm Gr}^F_p {\rm DR}_{X/B}(\mathbf D({\rm IC}_X^H))[\dim B], \]
which induces the quasi-isomorphism claimed in the theorem statement by applying Grothendieck duality.
\end{proof}

\section{Comparisons and Open Questions} \label{sec-OpenProblems}
Though our complexes satisfy many desirable properties, several basic questions remain open.

\textit{Question 1} Can one use a co-\v{C}ech construction to define $\underline{\Omega}_{X/B}^p$ in the non-smooth-factorizable setting? If so, can one filter $\underline{\Omega}_X^j$ as in \theoremref{thm-filtration} for morphisms which are not smooth-factorizable? One issue with this is that the relation $\mathbb D_X{\rm Gr}^F_{-p} {\rm DR}_X(-) \simeq {\rm Gr}^F_p {\rm DR}(\mathbf D_X(-))$ is only known to hold for $X$ which is embeddable into a smooth variety.

For flat families over a smooth curve, Kov\'{a}cs and Taji prove functoriality of their filtered relative Du Bois complex \cite{KovacsTaji}*{Thm. 4.2}; on associated graded pieces this recovers the functoriality of Kov\'{a}cs' relative complexes. The following would be interesting, though not necessary for our purposes.

\textit{Question 2} Does our construction satisfy functoriality? Specifically, given a proper morphism $g \colon X \to Y$ of $B$-varieties (whose maps to $B$ are both smooth-factorizable) and an integer $p \in \Z$, is there a morphism
\[ \underline{\Omega}_{Y/B}^p \to Rg_* \underline{\Omega}_{X/B}^p?\]

If so, can these morphisms be constructed in such a way that the morphism
\[ \underline{\Omega}_{Y}^p \to Rg_* \underline{\Omega}_X^p\]
is $G$-filtered for all $p\in \Z$?

We turn now to the question of comparing our invariant to Kov\'{a}cs'. As pointed out to us by Kov\'{a}cs, our complex may differ from those defined by him for the following reason: in his construction, one could make a choice to either build ``bottom up'' or ``top down''. He chose ``top down'', so that the top relative Du Bois complex $\underline{\Omega}_{X/B}^r$ (where $r = \dim X - \dim B$) is equal to what is expected for a flat family $X \to B$. On the other hand, our notion seems to more closely match a ``bottom up'' procedure: we know by \theoremref{thm-filtration} that if $X\to B$ is smooth-factorizable, then $\underline{\Omega}_X^0 \to \underline{\Omega}_{X/B}^0$ is a quasi-isomorphism, since by Remark \ref{rmk-LowestPiece}, $\underline{\Omega}_{X/B}^{-1} = 0$.

Following Ning \cite{NingDeformations}, we denote Kov\'{a}cs' relative Du Bois complexes by $\underline{\Omega}_{X/B}^{p,-}$, the so-called ``left relative Du Bois complexes''.

\textit{Question 3} Can we construct a quasi-isomorphism
\[ \underline{\Omega}_{X/B}^{p,-} \simeq {\rm Gr}^F_{-p-\dim B} {\rm DR}_{X/B}(\Q_X^H)[p+\dim B] \otimes_{\cO_X} f^* \omega_B^{-1}\]
for any $p\in \Z$?

Answering this question would give a mixed Hodge module interpretation of Kov\'{a}cs' construction. The shift and twist by $f^*\omega_B^{-1}$ are motivated by the case of a smooth morphism.

Finally, Ning \cite{NingDeformations} defines, when the base $B$ is a curve, the ``right'' relative Du Bois complexes $\underline{\Omega}_{X/B}^{p,+}$. As noted there, this notion seems better suited to base change.

\textit{Question 4} For $\dim B =1$, can we construct a quasi-isomorphism
\[ \underline{\Omega}_{X/B}^{p,+} \simeq \underline{\Omega}_{X/B}^p\]
for any $p\in \Z$?

\bibliography{bibliography}

\end{document}